\documentclass[12pt]{amsart}
\usepackage{amsmath,amssymb}
\usepackage{amscd}
\usepackage{layout}
\usepackage[mathscr,mathcal]{eucal}
\usepackage[pdftex]{graphicx}
 \newtheorem{thm}{Theorem}[subsection]
 
 \newtheorem{lem}[thm]{Lemma}
 \newtheorem{prop}[thm]{Proposition}
 \theoremstyle{definition}
 \newtheorem{defn}[thm]{Definition}
 \theoremstyle{remark}
 
 \numberwithin{equation}{subsection}

\oddsidemargin    =1cm \evensidemargin   =1cm \textwidth =16cm
\textheight       =21.6cm  \headheight       =5pt \topmargin = 0cm
\footskip         =0.5cm \topskip          =0.5cm
\begin{document}

\title[The Green's Functions of the Boundaries of the Hyperbolic 3-Manifolds]
{The Green's Functions of the Boundaries at Infinity of the
Hyperbolic 3-Manifolds}
\author{Majid Heydarpour}
\address{Majid Heydarpour, Department of Mathematics, Institute for Advanced Studies in Basic
Sciences, P.O. Box 45195-1159, Zanjan, Iran}

\email{Heydarpour@iasbs.ac.ir}

\date{}
\subjclass{30F10, 30F30, 30F35}

\keywords{Green's function, Riemann surface, Hyperbolic plain, Space
and 3-Manifold, uniformization, Kleinian group,  Hausdorff
dimension, Automorphic function, Divisor}

\begin{abstract}
 The work is motivated by a result of Manin in \cite{Man1}, which
relates the Arakelov Green function on a compact Riemann surface to
configurations of geodesics in a 3-dimensional hyperbolic handlebody
with Schottky uniformization, having the Riemann surface as
conformal boundary at infinity.  A natural question is to what
extent the result of Manin can be generalized to cases where,
instead of dealing with a single Riemann surface, one has several
Riemann surfaces whose union is the boundary of a hyperbolic
3-manifold, uniformized no longer by a Schottky group, but by a
Fuchsian, quasi-Fuchsian, or more general Kleinian group. We have
considered this question in this  work and obtained several partial
results that contribute towards constructing an analog of Manin's
result in this more general context.

\end{abstract}
\maketitle

\section*{ Introduction}
\subsection*{Results} Lets consider that $S$ is a compact Riemann surface. Then we can
define a Green's function with respect to a metric and a divisor on
$S$. Suppose that $M$ is a hyperbolic 3 - Manifold with infinite
volume and having compact Riemann surfaces $S_1,S_2,...,S_n$ as its
conformal boundaries at infinity. Also choose a geometry on $M$
which bears a metric of constant negative curvature. The main new
results of this paper express the Green's functions on each $S_i$ in
terms of the length of some certain geodesics in $M$. Manin has done
this provided that $M$ has one boundary component (i.e. n=1) and is
uniformized by a Schottky group in \cite{Man1}. In this work we will
generalize Manin's results in general case, $M$ has more than one
boundary components and is uniformized by a Fuchsian, quasi-Fuchsian or Kleinian group.\\

In this introduction we state the main definitions, motivation and
the plan of doing the work. We start by introducing the definition
of a Green's function on a compact Riemann surface for a divisor and
with respect to a normalized volume form on it.

\subsection*{Green's functions on Riemann surfaces} Let $S$ be a compact
Riemann surface and $A=\sum_x m_x(x)$ ( with $m_x$ integer number )
be a divisor on $S$. we show the support of $A$ by $|A|$. Also lets
consider that $d\mu$ is  a positive real - analytic 2 - form  on
$S$. By a Green's function on $S$ for $A$ with respect to $d\mu$ we
mean a real analytic function
$$g_{\mu,A}=g_A:S\setminus |A|\longrightarrow\mathbb{R}$$
satisfying the following conditions:\\
(i) \emph{Laplace equation: }
$$\partial\bar\partial g_A=\pi i(deg(A)d\mu - \delta_A ).$$
Where $deg(A)=\sum_x m_x$, and  $\delta_A$ is the standard
$\delta$ - current $\varphi\mapsto \sum_x m_x\varphi(x)$. \\

(ii) \emph{Singularities:} Let $z$ be a complex coordinate in a
neighborhood of the point $x$. Then $g_A - m_x\log |z|$ is locally
real analytic.\\

A function satisfying these two conditions is uniquely determined up
to an additive constant. And the third condition is

(iii) \emph{Normalization:}
$$\int_Sg_Ad\mu=0.$$
Which eliminates the remaining ambiguous constant. $g_A$ is additive
on $A$ and for $x\neq y$, $g_x(y)$ is symmetric, i.e.
$g_x(y)=g_y(x)$. \\

Lets consider that $B=\sum_yn_y(y)$ is another divisor on $S$ that
is prime to the divisor $A$. That means, $|A|\cap |B|=\emptyset$.
And Put
\begin{align}
g_\mu(A, B):=\sum_yn_yg_{\mu,A}(y).\label{F0.10}
\end{align}
$g_A$ is additive and $g_x(y)$ is
symmetric, then $g_\mu(A, B)$ is symmetric and biadditive in $A,B$.\\

In general, the function $g_{\mu}$ depends on the metric $d\mu$, but
in the case that both of the divisors are of the degree zero, from
the condition (i) we see that  $g_{\mu, A, B}$ depends only on $A,
B$. Notice that, as a particular case of the general Kahler
formalism, to choose $d\mu$ is the same as to choose a real analytic
Riemannian metric on $S$ compatible with the complex structure. This
means that $g_{\mu}(A, B)=g(A, B)$ are conformal invariants when
both divisors are of degree zero. Also in the case that the divisor
$A$ is principal, i.e. $A$ is the divisor of a meromorphic function
like $f_A$, then
\begin{align}
g(A, B)=\log \prod_{y\in
|B|}|f_A(y)|^{n_y}=Re\int_{\gamma_B}\frac{df_A}{f_A}\label{F0.11}
\end{align}
Where the curve $\gamma_B$ is a 1 - chain with boundary $B$. The
divisors of degree zero on the Riemann sphere
$\mathbb{P}^1(\mathbb{C})$ are principal. Then this formula can be
directly applied to divisors of degree zero on Riemann sphere.\\

It is  well known that the Green's function of the degree zero
divisors on a Riemann surface of arbitrary genus can be expressed
exactly via the differential of the third kind $\omega_A$ with pure
imaginary periods and residues $m_x$ at $x$ when the divisor is
$A=\sum m_x(x)$(see, \cite{AG}, \cite{IA}). Then the generalization
of the previous  formula for arbitrary divisors $A,B$ of degree zero
is
\begin{align}
g(A, B)=Re \int_{\gamma_B}\omega_A.\label{F0.12}
\end{align}

In general, when the  degree of the divisors are not restricted to
zero, the basic Green's function $g_{\mu, (x)}(y)$ can be expressed
explicitly via  theta functions ( as in \cite{Fal} ) in the case
when $\mu$ is the \emph{Arakelov metric} constructed with the help
of an orthonormal basis of the differentials of the first kind on
the Riemann surface.

\subsection*{Motivations}  The result of Manin in \cite{Man1} which
relates the Arakelov Green function on a compact Riemann surface to
configurations of geodesics in a 3-dimensional hyperbolic handlebody
with Schottky uniformization, having the Riemann surface as
conformal boundary at infinity, was extremely innovative and
influential and had a wide range of consequences in the arithmetic
context of Arakelov geometry as well as and in other contexts,
ranging from p-adic geometry, real hyperbolic 3-manifolds, the
holography principle and AdS/CFT correspondence in string theory,
and noncommutative geometry. A natural question is to what extent
the result of Manin can be generalized to cases where, instead of
dealing with a single Riemann surface, one has several Riemann
surfaces whose union is the boundary of a hyperbolic 3-manifold,
uniformized no longer by a Schottky group, but by a Fuchsian,
quasi-Fuchsian, or more general Kleinian group. Such a
generalization is not only interesting because it is a very natural
question to pass from Kleinian-Schottky groups to more general
Kleinian groups, but also for its potential applications to Arakelov
geometry, to the case of curves defined over number fields with
several Archimedean places, while Manin's result was formulated for
the case of arithmetic curves defined over the rationales.

\subsection*{Plan} We have focused on the formula in Manin's work,
that expresses the Arakelov Green's function on a compact Riemann
surface in terms of a basis of holomorphic differentials of the
first kind and of differentials of the third kind. In the case of
Schottky uniformization, when the limit set has Hausdorff dimension
strictly smaller than one, one can construct such differentials in
terms of averages over the Schottky group. While the same type of
formula no longer holds in the Fuchsian or quasi-Fuchsian case, we
use the canonical covering map relating Fuchsian and Schottky
uniformization and the coding of limit sets for the Fuchsian and
Schottky case, to express the Green function in the Fuchsian or
quasi-Fuchsian case in terms of the one in the Schottky case. The
approach we follow for the more general Kleinian case is via a
decomposition of the uniformizing group as a free product of
quasi-Fuchsian and Schottky groups and applying the results we
obtained for these cases individually. The paper consists of three
chapters devoted to the cases that the hyperbolic 3-manifold $M$
have 1, 2 and $n<\infty$ many boundary components at infinity
respectively. First chapter pays to the one boundary component case
and includes four subchapter devoted to the Foundations and the
genera 0,1 and $\geq 2$ respectively. Also this chapter contains
fundamental definitions and basic computations that are bases for
the computations in the next chapters. The results of this chapter
are a summary of the reference \cite{Man1}. All hyperbolic
3-manifolds with two boundary components with the same genera are
uniformized by Quasi-fuchsian groups; the second chapter is devoted
to this manifolds. In third chapter we consider the general case
i.e. the case that $M$ is uniformized by some Kleinian groups and
have $n<\infty$ boundary components. Finally, as a remark for the
chapter three we show that for manifolds with  $\infty$ many
boundary components at infinity the situation is like the previous.

\subsection*{Acknowledgement} I am grateful to Matilde Marcolli who
suggested this problem, supported me in Max-Planck and Hausdorff
institute in Bonn, and supervised me in all process. I also many
thank Saad Varsaie for several useful and encouraging comments.

\section{One boundary component case}
In this chapter, first we gather some fundamental definitions and
some useful notations that will be used in all of the paper. Next we
bring the computation of Green's function for Riemann sphere
$\mathbb{P}^1(\mathbb{C})$, the boundary at infinity of the
hyperbolic space. All 3-manifolds with one boundary component that
is a compact Riemann surface with genus $\geq 1$, are uniformized by
Schottky groups. The rest of the chapter is devoted to these
manifolds. The results of this chapter are a summary of the
reference \cite{Man1}.

\subsection{Foundations} Consider the hyperbolic space $H^3$ with
the upper half space model and coordinate $(z,y)$ that comes from
$\mathbb{C}\times{\mathbb{R}}_+$ equipped by the hyperbolic distance
function corresponding to the metric
\begin{align}
ds^2=\frac{|dz|^2+dy^2}{y^2}\label{F1.1}
\end{align}
of constant curvature -1. The geodesics in this model are vertical
half - lines $Z=constant$ and also vertical half - circles
orthogonal to the plane at infinity $\mathbb{C}$ ( i.e. for $y=0$ ).\\

If we consider the end points of the geodesics ( including $\infty$
for the ends of the vertical half-lines), then we can consider the
Riemann sphere $\hat{\mathbb{C}}=\mathbb{C}\cup \{\infty\}$ ( or
$S^2$ in the unit boll model ) as the boundary at infinity of $H^3$.
\subsubsection{Notations} Lets show the geodesic joining $a$ to $b$ in
$H^3\cup \hat{\mathbb{C}}$ by $\{a, b\}$; by $a\ast\gamma$ the point
on the geodesic $\gamma$ closest to the point $a$ ( i.e, the
intersection point of $\gamma$ and a geodesic passing through $a$
and orthogonal to $\gamma$ ); by $d_u(a, b)$ the distance from $u$
to the geodesic $\{a, b\}$ and by $ordist(a, b)$ the oriented
distance between two points lying on an oriented geodesic in $H^3$ (
Figure \ref{Fig1.10}). Also lets show by $\varphi_u(a, b)$ the angle
(at $u$ ) between the semi - geodesics joining $u$ to $a$ and $u$ to
$b$, and for $a,b$ in $\hat{\mathbb{C}}$ by $\psi_{\gamma}(a, b)$
the oriented angle between the semi - geodesics joining
$a\ast\gamma$ to $a$ and $b\ast\gamma$ to $b$. In order to calculate
$\psi_{\gamma}(a, b)$ we must first make the parallel translation
along $\gamma$ identifying the normal spaces to $\gamma$ at
$a\ast\gamma$ and at $b\ast\gamma$. The orientation of a normal
space is defined by projecting it along oriented $\gamma$ to its
initial end into the tangent space to $\hat{\mathbb{C}}$, which is
canonically oriented by the complex structure.
\begin{figure}[!hbtp]
\centerline{\includegraphics{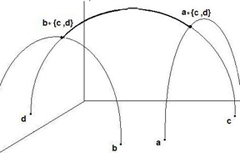}} \caption { }\label{Fig1.10}
\end{figure}
\subsection{Genus zero case} In this case we consider that the hyperbolic manifold $M$ is
the hyperbolic space $H^3$ with Riemann-sphere as its boundary at
infinity. For this case we have
\begin{prop}\label{P1.5} For $a, b, c$ and $d$ in
$\mathbb{P}^1(\mathbb{C})$, denote by $w_{(a)-(b)}$ a meromorphic
function on $\mathbb{P}^1(\mathbb{C})$ with the divisor $(a)-(b)$.
Then we have
\begin{align}
\log\biggm|\frac{w_{(a)-(b)}(c)}{w_{(a)-(b)}(d)}\biggm|= -
ordist(a\ast\{c, d\},b\ast\{c, d\})\label{F1.3}
\end{align}
and
\begin{align}
\arg\frac{w_{(a)-(b)}(c)}{w_{(a)-(b)}(d)}= - \psi_{\{c,d\}}(a,
b)\label{F1.4}
\end{align}
\end{prop}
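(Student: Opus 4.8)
The plan is to exploit the invariance of both sides of \eqref{F1.3} and \eqref{F1.4} under the group $PSL_2(\mathbb{C})$ of orientation-preserving isometries of $H^3$, which acts on the boundary $\hat{\mathbb{C}}$ by M\"obius transformations. On the left-hand side, since $w_{(a)-(b)}(z)=\lambda(z-a)/(z-b)$ for some nonzero constant $\lambda$, the ratio
\begin{align}
\frac{w_{(a)-(b)}(c)}{w_{(a)-(b)}(d)}=\frac{(c-a)(d-b)}{(c-b)(d-a)}\label{Fplan1}
\end{align}
is independent of $\lambda$ and is the cross-ratio of $a,b,c,d$, hence invariant under M\"obius transformations. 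On the right-hand side, the geodesic $\{c,d\}$, the perpendicular feet $a\ast\{c,d\}$ and $b\ast\{c,d\}$, the oriented distance and the oriented angle are all defined purely in terms of the hyperbolic geometry of $H^3$; since $PSL_2(\mathbb{C})$ acts by orientation-preserving isometries carrying $\{c,d\}$ to $\{Tc,Td\}$ and $a\ast\{c,d\}$ to $(Ta)\ast\{Tc,Td\}$, both quantities on the right are invariant as well. Therefore it suffices to verify the two identities after moving the configuration into a convenient normal position.

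First I would choose $T\in PSL_2(\mathbb{C})$ with $Tc=0$ and $Td=\infty$, so that the oriented geodesic $\gamma=\{c,d\}$ becomes the positive $y$-axis, oriented in the direction of increasing $y$. In this position the ratio \eqref{Fplan1} degenerates, in the limit $d\to\infty$, to $a/b$, so that $\log|w(c)/w(d)|=\log(|a|/|b|)$ and $\arg(w(c)/w(d))=\arg a-\arg b$. The foot $a\ast\gamma$ is the point where the geodesic through $a$ orthogonal to the $y$-axis meets it; this geodesic is the Euclidean half-circle centered at the origin through $a$, which is orthogonal to the vertical axis exactly at its apex $(0,|a|)$. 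Hence $a\ast\gamma=(0,|a|)$ and likewise $b\ast\gamma=(0,|b|)$.

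For \eqref{F1.3}, the hyperbolic oriented distance along the $y$-axis between the points at heights $|a|$ and $|b|$, measured in the direction of increasing $y$, is $\log|b|-\log|a|$, so that $ordist(a\ast\gamma,b\ast\gamma)=\log(|b|/|a|)$ and $-ordist(a\ast\gamma,b\ast\gamma)=\log(|a|/|b|)$, which matches the left-hand side. For \eqref{F1.4}, at the apex $(0,|a|)$ the unit tangent of the half-circle pointing toward $a$ is horizontal with complex argument $\arg a$, and similarly the tangent toward $b$ at $(0,|b|)$ has argument $\arg b$. Parallel transport along the $y$-axis preserves horizontal directions, because the rotations $z\mapsto e^{i\theta}z$ about the axis are isometries commuting with transport; thus after identifying the normal spaces along $\gamma$ the two directions retain arguments $\arg a$ and $\arg b$. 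Measuring the oriented angle with the complex orientation induced at the initial end $c=0$ gives $\psi_\gamma(a,b)=\arg b-\arg a$, whence $-\psi_\gamma(a,b)=\arg a-\arg b$, as required.

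The explicit normal-position computation is routine; the delicate point, and the one I would treat most carefully, is the bookkeeping of orientations: fixing the orientation of $\gamma$ from $c$ to $d$, confirming that the oriented distance and the oriented angle $\psi_\gamma$ change sign consistently with the cross-ratio when $c$ and $d$ are interchanged, and justifying that parallel transport along the vertical geodesic acts trivially on the complex directions, so that $\psi_\gamma(a,b)$ really reduces to $\arg(b/a)$.
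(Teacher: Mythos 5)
Your proof is correct and follows essentially the same route as the paper's: both reduce via M\"obius invariance of the cross-ratio and of the hyperbolic data to the normal position where $\{c,d\}$ is the vertical axis, identify the perpendicular feet as $(0,|a|)$ and $(0,|b|)$, and compute the oriented distance and the parallel-transported angle there. The only cosmetic difference is that the paper additionally normalizes to $(a,b,c,d)=(z,1,0,\infty)$, whereas you keep $a,b$ general.
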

\begin{proof} Mobius transformations preserve hyperbolic distance and angel. Then both
sides of (\ref{F1.3}) and (\ref{F1.4}) are invariant under these
transformations. Hence it suffices to consider the case when
$(a,b,c,d)=(z,1,0,\infty)$ in $\mathbb{P}^1(\mathbb{C})$. Then the
geodesic $\{a,b\}=\{0,\infty\}$ is the $y$ semi-axis in
$(z,y)=(z_1+iz_2,y)$ coordinate for $H^3$ and the geodesics joining
the points $a=z$ and $b=1$ normally to this semi-axis are half
circles passing from these points with center in $0$ and normal to
$\mathbb{C}$ ( Figure \ref{Fig1.11}).
\begin{figure}[!hbtp]
\centerline{\includegraphics{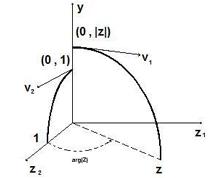}} \caption { }\label{Fig1.11}
\end{figure}
Then we have
$a\ast\{c,d\}=z\ast\{0,\infty\}=(0,|z|)~~,~~b\ast\{c,d\}=1\ast\{0,\infty\}=(0,1)$
and
$$ordist(a\ast\{c, d\},b\ast\{c, d\})=ordist((0,|z|), (0,1))=-\log|z|.$$
Also from the properties of cross - ratio we have
$$\frac{w_{(a)-(b)}(c)}{w_{(a)-(b)}(d)}=\frac{w_{(z)-(1)}(0)}{w_{(z)-(1)}(\infty)}=z.$$
This gives (\ref{F1.3}) directly. To see (\ref{F1.4}), we know that
angles in $H^3$ can be calculated using the Euclidean metric
$|dz|^2+dy^2$ and also the parallel transport along $y$ semi-axis
coincides with the Euclidean one. As it's shown in figure
\ref{Fig1.11} the vectors $v_1$ and $v_2$ tangent to the geodesics
from the points $z$ and $1$, are normal to $\{0,\infty\}$. Hence  if
we  transport (parallel) the vector $v_2$ in the point $(0,1)$ to
the point $(0,|z|)$ then both vectors are in an Euclidean plane
normal to $y$ semi-axis. Then the angel from  the vector $v_1$ to
the vector $v_2$ can be calculated via there angels from $z_1$ axis.
Then we have
$$\psi_{\{0,\infty\}}(z,1)=arg(v_2)-arg(v_1)=-arg\frac{w_{(z)-(1)}(0)}{w_{(z)-(1)}(\infty)}.$$
This proves (\ref{F1.4}).
\end{proof}
The part $w_{(a)-(b)}(c)/w_{(a)-(b)}(d)$ in the proposition above is
the classical cross - ratio of four points on the Riemann sphere
$\mathbb{P}^1(\mathbb{C})$, for which it is convenient to have a
special notation
\begin{align}
\langle a, b, c, d\rangle:=\frac{w_{(a)-(b)}(c)}{w_{(a)-(b)}(d)}.\label{F1.5}
\end{align}
\begin{thm} Let $a, b, c$ and $d$ be in
$\mathbb{P}^1(\mathbb{C})$. Then we have\\
\begin{align}
g((a)-(b),(c)-(d))&= - ordist(a\ast\{c,d\},b\ast\{c,d\})\nonumber\\
&=\log|\langle a, b, c, d\rangle|.\label{F1.6}
\end{align}\\
\end{thm}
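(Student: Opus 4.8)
The plan is to assemble the statement from two ingredients that are already in place: the general formula (\ref{F0.11}) for the Green's pairing against a principal divisor, and the geometric identity (\ref{F1.3}) of Proposition \ref{P1.5}. No new analysis is required; the content of the theorem is the bookkeeping that connects these two.

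First I would observe that on $\mathbb{P}^1(\mathbb{C})$ every divisor of degree zero is principal, as remarked after (\ref{F0.11}). In particular $A=(a)-(b)$ is the divisor of the meromorphic function $w_{(a)-(b)}$, so formula (\ref{F0.11}) applies with $f_A=w_{(a)-(b)}$ and with $B=(c)-(d)$. Since $|B|=\{c,d\}$ carries multiplicities $n_c=1$ and $n_d=-1$, the product in (\ref{F0.11}) collapses to
$$g((a)-(b),(c)-(d))=\log\frac{|w_{(a)-(b)}(c)|}{|w_{(a)-(b)}(d)|}=\log\biggm|\frac{w_{(a)-(b)}(c)}{w_{(a)-(b)}(d)}\biggm|,$$
which by the definition (\ref{F1.5}) of the cross-ratio is exactly $\log|\langle a,b,c,d\rangle|$. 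This establishes the second equality in (\ref{F1.6}). For the first equality I would simply invoke (\ref{F1.3}) of Proposition \ref{P1.5}, which asserts that $\log|w_{(a)-(b)}(c)/w_{(a)-(b)}(d)|$ equals $-\,ordist(a\ast\{c,d\},b\ast\{c,d\})$; chaining this with the display above closes the chain of equalities.

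The step requiring the most care — and essentially the only place an error could hide — is the legitimacy of applying (\ref{F0.11}). The function $w_{(a)-(b)}$ is determined only up to a multiplicative constant $\lambda$, and one must check that the right-hand side does not depend on this choice: replacing $w_{(a)-(b)}$ by $\lambda\,w_{(a)-(b)}$ multiplies the product in (\ref{F0.11}) by $|\lambda|^{\deg B}=|\lambda|^{0}=1$, so the ambiguity disappears precisely because $B$ has degree zero. I would likewise record that the degree-zero condition on \emph{both} divisors makes $g$ independent of the metric $d\mu$ (as noted in the discussion following (\ref{F0.11})), so that the symbol $g((a)-(b),(c)-(d))$ is unambiguous. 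Finally, I would make sure the sign and orientation conventions attached to $ordist$ here agree with those fixed in Proposition \ref{P1.5}, since the entire geometric content of the first equality is imported wholesale from that statement.
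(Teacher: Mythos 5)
Your proposal is correct and follows essentially the same route as the paper: apply the principal-divisor formula (\ref{F0.11}) to $A=(a)-(b)$, $B=(c)-(d)$ to get $\log|\langle a,b,c,d\rangle|$, then invoke (\ref{F1.3}) of Proposition \ref{P1.5} for the geometric equality. Your added checks — that the scaling ambiguity in $w_{(a)-(b)}$ cancels because $\deg B=0$ and that $g$ is metric-independent for degree-zero divisors — are sound and make explicit what the paper leaves implicit.
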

\begin{proof} From the formula (\ref{F0.11}) for the Green's
function in the case that the divisors  are principal, we have\\
\begin{align}
g((a)-(b),(c)-(d))&=\log|w_{(a)-(b)}(c)|^1\log|w_{(a)-(b)}(d)|^{-1}\nonumber\\
&=\log\biggm|\frac{w_{(a)-(b)}(c)}{w_{(a)-(b)}(d)}\biggm|.\nonumber
\end{align}\\
Then from proposition (\ref{P1.5}) and notation (\ref{F1.5}) we have
(\ref{F1.6}).\end{proof}
\subsection{Genus one case}  In this case the group that uniformize the
hyperbolic 3 - manifold  with its boundary at infinity is a cyclic
group, and there is a nice explicit formula for the basic Green's
function on the boundary at infinity of 3-manifold; but because we
do not use it here we do not want to point to it here. But one can
find it in \cite{Man1}( or for a physical point of view in
\cite{MM}). Of course the process in the next part can be used for
this case too. We have pointed some notes about this case in section
\ref{S1.5}.
\subsection{Genus$>$1 case and Schottky Groups} Consider the complex
projective linear transformations group
$\mathrm{PGL}(2,\mathbb{C})$. \emph{(i) Kleinian groups.} A subgroup
$G$ of $\mathrm{PGL}(2,\mathbb{C})$ is called a Kleinian group if
$G$ acts on $H^3$ properly discontinuously. For a Kleinian group $G$
and a point $p$ in $H^3$ lets denote by $G(p)$, the orbit of the
point $P$ under the action of $G$. Since $G$ acts on $H^3$ Properly
discontinuously, $G(p)$ has accumulation points only on
$\mathbb{P}^1(\mathbb{C})$. They are independent of the choice of
the reference point $p$ and are called the \emph{limit set} of $G$,
which is denoted by $\Lambda(G)$. Equivalently, it is the closure of
the set of all fixed points of the elements of $G$ other than the
identity. $\Lambda(G)$ is the minimal non - empty closed $G$ -
invariant set. The complement of the limit set
$\mathbb{P}^1(\mathbb{C})\setminus\Lambda(G)$ is denoted by
$\Omega(G)$ and is called the \emph{region of discontinuity} of
$G$.\\

 The Kleinian group $G$ is called of the \emph{first kind} if
$\Lambda(G)=\mathbb{P}^1(\mathbb{C})$ and of the \emph{second kind}
if $\Omega(G)\neq\emptyset$. In the case that $G$ is of the second
kind $G$ acts on $\Omega(G)$ properly discontinuously. Therefor in
this case $\Omega(G)$ is the maximal open $G$ - invariant subset of
$\mathbb{P}^1(\mathbb{C})$ where $G$ acts properly discontinuously.
The quotient space $\Omega(G)/G$ has the complex structure induced
from that of $\Omega(G)$. Thus $\Omega(G)/G$ is a countable union of
Riemann surfaces lying at infinity of the complete hyperbolic 3 -
manifold $H^3/G$. For a torsion free Kleinian group $G$, a manifold
$(H^3\cup\Omega(G))/G$ possibly with boundary is denoted by $M_G$
and is called a \emph{Kleinian manifold}. The interior $H^3/G$ of
$M_G$ which admits the hyperbolic structure is denoted by $N_G$.\\

 \emph{(ii) Loxodromic, Parabolic and Elliptic elements.} An element
$g$ in $\mathrm{PGL}(2,\mathbb{C})$ is called loxodromic if it has
exactly two different fixed points in $\mathbb{P}^1(\mathbb{C})$.
These points are denoted by $z^+(g)$ and $z^-(g)$ and are called
attracting one and repelling one. For any $z_0\neq z^\pm(g)$ and
$h\in\mathrm{PGL}(2,\mathbb{C})$, we have
$z^\pm(g)=\lim_{n\rightarrow\pm\infty}g^n,~z^\pm(g)=z^\mp(g^{-1}),~z^\pm(hgh^{-1})=hz^\pm(g)$.
If we denote by $q(g)$ the eigenvalue of $g$ on the complex tangent
space to $z^+(g)$  then there is a local coordinate for
$\mathbb{P}^1(\mathbb{C})$ that in this coordinate $g$ is
represented by
$\left(\begin{array}{cc}q(g)&0\\0&1\end{array}\right)$. For this
reason  $q(g)$ is called the multiplier of $g$, and we have
$|q(g)|<1$, $q(g)=q(g^{-1})=q(hgh^{-1})$. Also by definition $g$ is
called a parabolic element if it has precisely one fixed point in
$\mathbb{P}^1(\mathbb{C})$, and elliptic if it has fixed points in
$H^3$. In fact an elliptic element fixes all points of a geodesic in
$H^3$ joining two fixed points in $\mathbb{P}^1(\mathbb{C})$. For a
loxodromic or elliptic element $g$ the geodesic joining two fixed
points in $\mathbb{P}^1(\mathbb{C})$ is called axis of $g$. An
element is elliptic if and only if it has finite order, then
elliptic elements cause a singularity in $N_G$. For this reason we
consider Kleinian groups without elliptic elements. This means
that the group is torsion free or equivalently acts freely on $H^3$.\\

\emph{(iii) Schottky groups}. A finitely generated, free and purely
loxodromic  Kleinian group is called a Schottky group. Purely
loxodromic means,  all elements except the identity are loxodromic.
For such a group $\Gamma$ the number of a minimal set of generators
is called the genus of $\Gamma$. A marking for a Schottky group
$\Gamma$ of genus $p$ by definition is a family of $2p$ open
connected domains $D_1,...,D_{2p}$ in $\mathbb{P}^1(\mathbb{C})$ and
a family of generators $g_1,...,g_p\in\Gamma$ with the following
properties. For $i=1,...,p$\\

a) The boundary $C_i$ of $D_i$ is a Jordan curve homeomorphic to
$S^1$ and  closures of $D_i$ are pairwise disjoint.

b) $g_i(C_i)\subseteq C_{p+i}$ and
$g_i(D_i)\subset\mathbb{P}^1(\mathbb{C})\setminus D_{p+i}$.\\

 We say that a marking is classical, if all $C_i$ are circles. It is known
that every Schottky group admits a marking. In fact each Schottky
group admit infinitely many marking but there are some Schottky
groups for which no classical marking exists.\\

\emph{(iv) $\Gamma$ - invariant sets of  the Schottky groups and
there quotient spaces}. A Schottky group $\Gamma$ is a Kleinian
group, then it acts on $H^3$ properly discontinuously. This action
is free too. Then the  quotient space $N_{\Gamma}=H^3/\Gamma$ has a
complete  hyperbolic 3-manifold structure this means that it is a
non-compact Riemann space of constant curvature -1. Topologically,
If $\Gamma$ is of genus $p$ then $N_{\Gamma}=H^3/\Gamma$
is the interior of  a handlebody of genus $p$.\\

Now lets consider  the  marking $\{~ D_1,...,D_{2p}~;~
g_1,...,g_p~\}$ for the Schottky group $\Gamma$ of genus $p$ and put
\begin{align}
X_{\Gamma}:=\mathbb{P}^1(\mathbb{C})\setminus\bigcup_{i=1}^{p}(D_i\cup
{\bar{D}}_{p+i}),\hspace{5mm}
\Omega(\Gamma):=\bigcup_{g\in\Gamma}g(X_{\Gamma}).\label{F1.9}
\end{align}
The set $\Omega(\Gamma)$ is the region of discontinuity of $\Gamma$
and $X_{\Gamma}$ is a fundamental domain for the action of $\Gamma$
on $\mathbb{P}^1(\mathbb{C})$. Then $\Gamma$  acts on
$\Omega(\Gamma)$ freely and properly discontinuously. So the
quotient space $S_{\Gamma}=\Omega(\Gamma)/\Gamma$ is a complex
Riemann surface of genus $p$. All compact Riemann surfaces can be
obtained in this way ( see \cite{H}) and every compact Riemann
surface admits infinitely many different Schottky covers.\\

The Cayley graph of a Schottky group $\Gamma$ of genus $p$ is an
infinite tree with multiplicity of $2p$ at each vertex. $\Gamma$ is
free, then this tree is without loops and each path between two
points is unique. Now, by the definition of a marking for $\Gamma$,
each generator $g_i$ takes $X_\Gamma$ to the inside of $C_{p+i}$ and
again the generator $g_j$ takes this part to the inside of the
second copy of $C_{p+j}$ inside $C_{p+i}$. This means that for each
element $g$ in $\Gamma$ we can associate a path in the Cayley graph
that the points of $X_\Gamma$ moves along that path on a tubular
neighborhood. This  express the action of $\Gamma$ on the set
$\Omega(\Gamma)$ ( Figure \ref{Fig1.12} illustrating this for the
case $p=2$ ).
\begin{figure}[!hbtp]
\centerline{\includegraphics{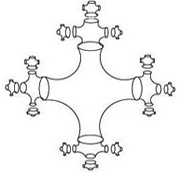}} \caption { }\label{Fig1.12}
\end{figure}
 We can consider the Riemann surface $S_{\Gamma}$ as the boundary at infinity of the
manifold $N_{\Gamma}$ by identifying the points of $S_{\Gamma}$ with
the points of the set of equivalence classes of unbounded ends of
oriented geodesics in $N_{\Gamma}$ modulo the relation "distance=0". \\

By the definition of the limit set for a Kleinian group, the
complement
\begin{align}
\Lambda(\Gamma):=\mathbb{P}^1(\mathbb{C})\setminus\Omega(\Gamma)
\end{align}
is the limit set of $\Gamma$. For the cyclic Schottky groups ( of
genus 1 ) the limit set  $\Lambda(\Gamma)$ consists of two points
which can be chosen as $0, \infty $, but for genus$\geq2$ this set
is an infinite Cantor set ( see \cite{FH}).\\

Lets consider the irreducible left - infinite words like
$\bar{h}=...h_i^{\varepsilon_i}...h_0^{\varepsilon_0}$. Where
$h_i\in\{g_1,...,g_p\}$,  $\varepsilon_i=\pm1$, and  $z_0$ is a
point in the fundamental domain $X_{\Gamma}$. And put
\begin{align}
z^+(\bar{h})=\lim_{i\rightarrow
+\infty}h_i^{\varepsilon_i}...h_0^{\varepsilon_0}(z_0)
\end{align}
This is a well defined point of $\Lambda(\Gamma)$ and is independent
of the point $z_0$. Since $\Gamma$ is a free group, the map
$\bar{h}\mapsto z^+(\bar{h})$ establishes a bijection as following
\begin{center}
Irreducible left - infinite words in $g_i$ \\
$\updownarrow$\\
Ends of the Cayley graph of $\Gamma$, $\{g_i\}$ \\
$\updownarrow$\\
 Points of $\Lambda(\Gamma).$\\
\end{center}
Denote by $a(\Gamma)$ the Hausdorff dimension of the set
$\Lambda(\Gamma)$. It can be characterized as the convergence
abscissa of any Poincare series
\begin{align}
\sum_{g\in\Gamma}\biggm|\frac{dg(z)}{dz}\biggm|^s.
\end{align}
Where $z$ is any coordinate function on $\mathbb{P}^1(\mathbb{C})$
with a zero and a pole in $\Omega(\Gamma)$. For $s\in\mathbb{C}$
that Re$(s)>a(\Gamma)$, this series converges uniformly on compact
subsets of $\Omega(\Gamma)$. Generally we have $0<a(\Gamma)<2$ (See
\cite{B}). In the next section we will consider $a(\Gamma)<1$ to
have the convergency of some infinite products defining some
$\Gamma$ - automorphic functions. This class of Schottky groups was
characterized by Bowen \cite{B} in following way: $a(\Gamma)<1$ if
and only if $\Gamma$ admits a rectifiable invariant quasi-circle
(which then contains $\Lambda(\Gamma)$ ).\\

Choose marking for $\Gamma$ of genus $p$ and denote by $a_i$ the
image of $C_i$ in $S_\Gamma$ with induced orientation. Also for
$i=1,...,p$ choose the points $x_i$ in $C_i$ and denote by $b_i$ the
images in $S_\Gamma$ of oriented pathes  from $x_i$ to $g_i(x_i)$
lying in $X_{\Gamma}$. These images are obviously closed paths and
we can choose them in such a way that they don't intersect. If we
denote the classes of these pathes in 1- homology group
$H_1(S_{\Gamma},\mathbb{Z})$ by the same notations, then the set
$\{a_i, b_j\}$ form a canonical basis of this group i.e. for all
$i=1,...,p$ we have
\begin{align}
(a_i,a_j)=(b_i, b_j)=0~,~ (a_i,b_j)=\delta_{ij}.
\end{align}
Moreover the kernel of the map
$H_1(S_{\Gamma},\mathbb{Z})\longrightarrow
H_1(M_{\Gamma},\mathbb{Z})$ which is induced by the inclusion
$S_\Gamma \hookrightarrow M_\Gamma$ is generated by the classes
$a_i$.
\subsubsection{Differentials of the first kind}\label{S1.5} Lets consider the cyclic
Schottky group $\Gamma=<qz>$, for $q\in\mathbb{C}^*$, $|q|<1$. In
this case $\Lambda(\Gamma)=\{0,\infty\}$ and consequently
$\Omega(\Gamma)=\mathbb{C}^*$. Then a differential of the first kind
on $\Omega(\Gamma)=\mathbb{C}^*$ can be written as
\begin{align}
\omega=d\log z =d\log
\frac{w_{(0)-(\infty)}(z)}{w_{(0)-(\infty)}(z_0)}=d\log \langle0,
\infty, z, z_0 \rangle.
\end{align}
where $z_0$ is any point $\neq 0, \infty$. And for $d\log q^nz
=d\log z$, this differential determines a differential of the first
kind on $S_\Gamma$.  In general case for a Schottky group $\Gamma$
of genus $p$ we can make a differential of the first kind $\omega_g$
for any $g\in\Gamma$ on $\Omega(\Gamma)$ and $S_\Gamma$ by an
appropriate averaging of this formula. Lets consider a marking for
$\Gamma$ and Denote by $C(|g)$ a set of representatives of
$\Gamma/(g^{\mathbb{Z}})$; by $C(h|g)$ a similar set for
$(h^{\mathbb{Z}})\setminus\Gamma/(g^{\mathbb{Z}})$; and by $S(g)$
the conjugacy class of $g$ in $\Gamma$. Then for any
$z_0\in\Omega(\Gamma)$ we have
\begin{prop} (a) If $a(\Gamma)< 1$, the following series converges
absolutely for $z\in\Omega(\Gamma)$ and determines (the lift to
$S_{\Gamma}$ of ) a differential of the first kind on $S_{\Gamma}$:
\begin{align}
\omega_{g}=\sum_{h\in C(|g)}d\log \langle h(z^+(g)), h(z^-(g)), z,
z_0 \rangle.
\end{align}
This differential does not depend on $z_0$, and depends on $g$
additively. Also If the class of $g$ is primitive ( i.e. non -
divisible in H:$=\Gamma/[\Gamma, \Gamma]$ ), $\omega_g$ can be
rewritten as following
\begin{align}
\omega_{g}=\sum_{h\in S(g)}d\log \langle z^+(h), z^-(h), z, z_0
\rangle.
\end{align}
(b) If $g_i$ form a part of the marking of $\Gamma$, and $a_i$ are
the homology classes described before, we have
\begin{align}
\int_{a_i}\omega_{g_j}=2\pi i\delta_{ij}.
\end{align}
It follows that the map $g$ mod $[\Gamma, \Gamma]\mapsto \omega_g$
embeds $H$ as a sublattice in the space of all differentials of the
first kind.\\

(c) Denote by $\{b_j\}$ the complementary set of homology classes in
$H_1(S_{\Gamma}, \mathbb{Z})$ as in before. Then we have for $i\neq
j$, with an appropriate choice of logarithm branches:
\begin{align}
\tau_{ij}:=\int_{b_i}\omega_{g_j}=\sum_{h\in C(g_i|g_j)}\log \langle
z^+(g_i),z^-(g_i), h(z^+(g_j)), h(z^-(g_j))\rangle\label{F1.10}
\end{align}
 And
\begin{align}
\tau_{ii}=\log q(g_i)+\sum_{h\in C_0(g_i|g_i)}\log \langle z^+(g_i),
z^-(g_i),h(z^+(g_i)), h(z^-(g_i))\rangle\label{F1.11}
\end{align}
where $C_0(g_i|g_i)$ is  $C(g_i|g_i)$ without the identity class.
\end{prop}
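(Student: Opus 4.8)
The plan is to read each summand $d\log\langle\alpha,\beta,z,z_0\rangle$ (with $\alpha,\beta$ in the limit set) as the third-kind differential $\bigl(\tfrac{1}{z-\alpha}-\tfrac{1}{z-\beta}\bigr)dz$ in the variable $z$; since its only poles $\alpha,\beta$ lie in $\Lambda(\Gamma)$, it is holomorphic on $\Omega(\Gamma)$. This expression is manifestly independent of $z_0$, which already gives the ``$\omega_g$ does not depend on $z_0$'' claim. For part (a) the analytic content is convergence. On a compact $K\subset\Omega(\Gamma)$ the denominators $(z-\alpha)(z-\beta)$ stay bounded below, because $\alpha=h(z^+(g))$ and $\beta=h(z^-(g))$ lie in $\Lambda(\Gamma)$ and are bounded away from $K$; the numerator is $|\beta-\alpha|=|h(z^+(g))-h(z^-(g))|$, which by the mean value estimate together with bounded distortion of Möbius maps on the fundamental domain is comparable to $|h'|$ on $K$. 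Hence $\sum_h|\beta-\alpha|$ is dominated by a Poincaré series $\sum_h|h'|^{1}$, and this converges exactly when $1>a(\Gamma)$. This distortion-and-Poincaré estimate is the one genuinely analytic step and is where the hypothesis $a(\Gamma)<1$ enters; I expect it to be the main obstacle. Uniform convergence on compacta then makes $\omega_g$ holomorphic on $\Omega(\Gamma)$.

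For invariance and descent, let $\gamma\in\Gamma$. Möbius invariance of the cross-ratio gives $d_z\log\langle\alpha,\beta,\gamma z,z_0\rangle=d_z\log\langle\gamma^{-1}\alpha,\gamma^{-1}\beta,z,z_0\rangle$ (using $z_0$-independence to absorb $\gamma^{-1}z_0$), so $\gamma^{*}\omega_g$ is the same series reindexed by $h\mapsto\gamma^{-1}h$, which merely permutes $C(|g)=\Gamma/g^{\mathbb{Z}}$; thus $\gamma^{*}\omega_g=\omega_g$ and $\omega_g$ descends to a differential of the first kind on $S_\Gamma$. The conjugacy-class reformulation for primitive $g$ follows since $z^{\pm}(hgh^{-1})=h z^{\pm}(g)$ and, $g$ being primitive in a free group, its centralizer is $g^{\mathbb{Z}}$, so $h\mapsto hgh^{-1}$ is a bijection $\Gamma/g^{\mathbb{Z}}\to S(g)$ matching the two sums term by term.

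For part (b) I would compute $\int_{a_i}\omega_{g_j}$ as a sum of winding numbers: since $a_i$ is the image of the Jordan curve $C_i=\partial D_i$ and $\tfrac{1}{2\pi i}\oint_{C_i}d\log\langle\alpha,\beta,z,z_0\rangle$ equals the winding of $C_i$ about the zero $\alpha$ minus that about the pole $\beta$, only those $h$ for which $C_i$ separates $h(z^+(g_j))$ from $h(z^-(g_j))$ contribute. The marking combinatorics — the nesting of the tiles $h(X_\Gamma)$ and the relations $g_i(C_i)=C_{p+i}$ — show this occurs for exactly one $h$ when $j=i$ (the identity coset, with $z^{\pm}(g_i)$ on opposite sides of $C_i$) and for no $h$ when $j\neq i$, giving $\int_{a_i}\omega_{g_j}=2\pi i\,\delta_{ij}$; this placement of fixed points relative to $C_i$ is the delicate point of (b). The same argument applied to a general $g$ gives $\int_{a_i}\omega_g=2\pi i\,\nu_i(g)$, where $\nu_i(g)$ is the $i$-th coordinate of $[g]\in H$, which is additive and depends only on $[g]$; since a first-kind differential is determined by its $a$-periods, $g\mapsto\omega_g$ factors through $H$ additively and injectively, embedding $H$ as a lattice. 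This also supplies the additivity asserted in (a).

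For part (c) I would integrate $\omega_{g_j}$ along $b_i$, the image of a path from $x_i$ to $g_i(x_i)$ in $X_\Gamma$, so that each $d\log$ integrates to a difference of logarithms:
$$\int_{b_i}\omega_{g_j}=\sum_{h\in C(|g_j)}\log\langle h z^+(g_j),h z^-(g_j),g_i x_i,x_i\rangle.$$
Grouping the cosets $\Gamma/g_j^{\mathbb{Z}}$ into double cosets $g_i^{\mathbb{Z}}\backslash\Gamma/g_j^{\mathbb{Z}}$ and summing each inner $g_i^{\mathbb{Z}}$-orbit via $h\mapsto g_i^{n}h$, Möbius invariance converts the inner sum into a telescoping series whose endpoints tend to $z^+(g_i)$ and $z^-(g_i)$; freeness guarantees the inner index runs bijectively over $\mathbb{Z}$, and absolute convergence from (a) justifies the rearrangement. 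For $i\neq j$ this yields $\log\langle z^+(g_i),z^-(g_i),h z^+(g_j),h z^-(g_j)\rangle$ per double coset, giving (\ref{F1.10}). For $i=j$ the trivial double coset is exceptional, since the telescoping endpoints coincide with the poles of the cross-ratio; a direct computation in the normalizing coordinate $z\mapsto q(g_i)z$ evaluates the single surviving term to $\log q(g_i)$, while the remaining classes $C_0(g_i|g_i)$ telescope as before, producing (\ref{F1.11}).
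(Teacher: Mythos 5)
Your proposal is essentially correct, but note that the paper itself does not prove this proposition at all: its ``proof'' is a citation to Manin's papers, with only a remark about a $2\pi i$ normalization of $\tau_{ij}$. What you have written is a reconstruction of Manin's argument, and it coincides almost exactly with the proofs the paper \emph{does} spell out for the Fuchsian analogues in Section 2: the $a$-periods there are likewise computed as residue/winding counts over the disks $D_{p+j}$ of the marking (with the identity coset alone contributing when $i=j$), and the $b$-periods via the same double-coset decomposition $g_i^{\mathbb{Z}}\backslash\Gamma/g_j^{\mathbb{Z}}$ and the telescoping product $\prod_n \tfrac{A_n}{A_{n-1}}\tfrac{B_{n-1}}{B_n}$ whose endpoints converge to $z^{\pm}(g_i)$, with the exceptional identity class producing $\log q(g_i)$. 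Your derivation of additivity and of the lattice embedding from the $a$-period computation (a holomorphic differential being determined by its $a$-periods) is cleaner than anything stated in the paper. The one place where your sketch is thinner than it should be is the convergence estimate in (a): the points $h(z^{\pm}(g))$ lie in $\Lambda(\Gamma)$, not in the fundamental domain, so ``bounded distortion on the fundamental domain'' does not apply verbatim. The correct statement is that, choosing coset representatives $h$ whose reduced word does not end in a letter of $g$, both $h(z^+(g))$ and $h(z^-(g))$ lie in a single nested disk of the marking of depth $\ell(h)$, and $h^{-1}(\infty)$ stays uniformly away from $z^{\pm}(g)$, so $|h(z^+(g))-h(z^-(g))|=|z^+(g)-z^-(g)|\,|h'(z^+(g))|^{1/2}|h'(z^-(g))|^{1/2}$ is comparable to the value of $|h'|$ on a compact subset of $\Omega(\Gamma)$; only then is the sum dominated by the exponent-one Poincar\'e series. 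With that point made precise, the argument is complete.
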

\begin{proof} For the proofs, see \cite{Man1} and \cite{Man2}. Notice that
our notation here slightly differs from \cite{Man1}; in particular,
$\tau_{ij}$ here corresponds to $2\pi i\tau_{ij}$ of
\cite{Man1}.\end{proof}
\subsubsection{Differentials of the third kind and Green's functions} Lets consider the points $a$ and
$b$ in $X_\Gamma$ the fundamental domain of $\Gamma$ and put
$\nu_{(a)-(b)}=\sum_{h\in\Gamma} d\log \langle a, b, h(z), h(z_0)
\rangle$. Then assuming $a(\Gamma)<1$, we see that this series
absolutely converges and because it's $\Gamma$ - automorphic it
gives us a differential of the third kind on $S_{\Gamma}$ with
residues $\pm 1$ at the images of $a,b$ in $S_{\Gamma}$. Moreover,
since both points $a,b$ are out of the circles $C_i$,  its $a_i$ -
periods vanish. Now, if we consider the linear combination
$\nu_{(a)-(b)}-\sum_{j=1}^p X_j(a, b)\omega_{g_j}$
 with real coefficients $X_j$, then it will have pure
imaginary $a_i$ - periods and  If we find  real coefficients $X_j$
so that the real part of $b_i$ - periods of the form
\begin{align}
\omega_{(a)-(b)}=\nu_{(a)-(b)}-\sum_{j=1}^pX_j(a,b)\omega_{g_j}
\end{align}
 vanish, we will be able to use this differential in order
to calculate conformally invariant Green's functions. The set of the
equations for calculating the coefficients $X_j(a, b)$ are as
following
\begin{align}
\sum_{j=1}^pX_j(a,b)Re \tau_{ij}=Re\int_{b_i}\nu_{(a)-(b)}
=\sum_{h\in S(g_i)}\log |\langle a, b, z^+(h), z^-(h)\rangle|
\end{align}
for $i=1,...,p$. The parts  $Re \tau_{ij}$ are calculated by means
of formula (\ref{F1.10}) and (\ref{F1.11}), and $b_i$ - periods
of $\nu_{(a)-(b)}$  are given in \cite{Man1}.\\

Now if we denote the points $a,b,c$, and $d$ in $S_{\Gamma}$ and
their images in the fundamental domain $X_{\Gamma}$ by the same
notations, then we have
\begin{align}
Re\int_d^c\nu_{(a)-(b)}= \sum_{h\in\Gamma}\log\mid\langle a, b,
h(c),h(d)\rangle\mid,~Re\int_d^c\omega_{g_j}= \sum_{h\in
S(g_j)}\log\mid\langle
 z^+(h), z^-(h), c, d \rangle\mid\nonumber
\end{align}
And finally we have the Green's function on $S_\Gamma$ as following
\begin{align}
g((a)-(b),(c)-(d))&=Re\int_d^c\omega_{(a)-(b)}\nonumber \\
&=\sum_{h\in\Gamma}\log\mid\langle a, b, h(c),
h(d)\rangle\mid\nonumber\\
&-\sum_{j=1}^pX_j(a,b)\sum_{h\in S(g_j)}\log\mid\langle
 z^+(h), z^-(h), c, d \rangle\mid.
\end{align}
\section{Two boundary component case}
All hyperbolic Riemann surfaces are uniformized by Fuchsian groups.
Also Fuchsian groups act on hyperbolic space by Poincare extension
and uniformize some hyperbolic 3-manifolds with two boundaries at
infinity with the same genera. But they do not uniformize all such
manifolds; In fact they are uniformized by an extension of Fuchsian
groups that are called Quasi-Fuchsian groups. In this chapter first
we intend to extend the method in previous chapter to fuchsian
groups and then
for Quasi-fuchsian groups.\\

Consider hyperbolic plane $H^2$ with the upper half plane model and
coordinate $z=x+iy\in \mathbb{C}$ with $y>0$, equipped by the
hyperbolic distance function corresponding to the metric
\begin{align}
ds^2=\frac{|dz|^2}{y^2}
\end{align}
 of the constant curvature -1. The geodesics in this model
are vertical half-lines $x=constant$ and vertical half-circles
orthogonal to the line at infinity $\mathbb{R}$ ( i.e, for $y=0$ ).\\

If we consider the end points of geodesics ( including $\infty$ for
the end of vertical half-lines other than the end in $\mathbb{R}$ )
then we can consider circle $\hat{\mathbb{R}}=\mathbb{R}\cup
\{\infty\}$ ( or $\mathbb{P}^1(\mathbb{R})=S^1$  in unit disc model
) as boundary at infinity of $H^2$.
\subsection{Fuchsian Groups} Each subgroup $F$ of
$PGL(2,\mathbb{R})$, the general projective linear group over
$\mathbb{R}$, that acts freely and properly discontinuously on $H^2$
is called a Fuchsian group. Similar to the Kleinian groups the limit
set $\Lambda(F)$ and the region of discontinuity $\Omega(F)$ are
defined but in $\hat{\mathbb{R}}$, the boundary at infinity of
$H^2$. And the properties are similar to them. Also similar to the
loxodromic elements, a hyperbolic element is an element $h\in
PGL(2,\mathbb{R})$ with two fixed points $z^{\pm}(h)$ in
$\hat{\mathbb{R}}$. For a Fuchsian group $F$ the quotient space
$H^2/F$ is a hyperbolic Riemann surface  possibly with boundary $\Omega(F)/F$ ( If $\Omega(F)\neq\emptyset$ ).\\

Let $F$ be a Fuchsian group such that $S=H^2/F$ is a compact Riemann
surface with the genus $p>1$. Then $F$ is a purely hyperbolic group
of finite order $2p$ (for example see \cite{CS}). Lets denote by
$f_i$ the generators of $F$ and by $Fix(f_i)=\{z^{\pm}(f_i)\}$ the
fixed point set of $f_i$. Also consider that $P$ is the fundamental
polygon of $F$ with $a_1,b_1,a'_1,b'_1,\ldots,b'_{p}$ as it's sides,
such that $f_i(a_i)=a'_i$ and $f_{p+i}(b_i)=b'_i$ for
$i=1,\ldots,p$. We represent $P$ as following:
\begin{align}
P=a_1(x_1)b_1(y_1)a'_1(x'_1)b'_1(y'_1)\ldots a'_p(x'_p)b'_p(y'_p)
\end{align}
Where $x_i,  x'_i  ,y_i$  and $y'_i$ are the intersection points of
$a_i$ and $b_i$, $a_i'$ and $b_i'$ and so on. Now we have
$$\pi_1(S)\simeq F = \langle \{f_i\ ; i=1,\ldots,2p \};\prod_1^p
[f_i,f_{p+i}]=I \rangle$$ And also ( See for example \cite{CS}).
$$H_1(S,\mathbb{Z}) =
\frac{\pi_1(S)}{\langle\left\{aba^{-1}b^{-1}|a,b\in\pi_1(S)\right\}\rangle}
  = \mathbb{Z}\oplus\ldots\oplus\mathbb{Z}\qquad( 2p~~ times )$$
If we denote by ${\overline a}_i$ and ${\overline b}_i$ the images
of $a_i$ and $b_i$ in $S$, then $\{{\overline a}_i, {\overline
b}_j\}$ generate $H_1(S,\mathbb{Z})$.
\subsection{Extension of Fuchsian group on $H^3$} $F$ acts similarly, freely and
properly discontinuously on lower half plane $-H^2$ of $\mathbb{C}$
too, and $-H^2$ is $F$-invariant. Lets denote by $F$ the Poincare
extension of $F$ on $H^3$ too (see \cite{HK} or \cite{FH}). By this
extension $F$ can be considered as a Kleinian group. And we have
$$N_F = \frac{H^3}{F}\cong\frac{H^2}{F}\times(0,1) =
  S\times(0,1)\qquad and\qquad F = \pi_1(N_F) = \pi_1(S)$$
 (See \cite{HK}). And $N_F$ has a hyperbolic structure,
\cite{Mc}.
\subsection{F invariants} We know that $\Lambda(F)$ the limit point
set of $F$ is $\mathbb{\hat{R}}$ ( $S^1$ in unit disc model ),
\cite{B}. Then as a Kleinian group
$\Lambda(F)=\mathbb{\hat{R}}\subset\hat{\mathbb{C}}$ and the region
of discontinuity of $F$ considering as a Kleinian group is
\begin{align}
\Omega(F)=\mathbb{\hat{C}}\backslash\Lambda(F)=\mathbb{\hat{C}}\backslash
\mathbb{\hat{R}}=-H^2\cup H^2
\end{align}
  And also the Kleinian manifold is
\begin{align} M_F =\frac{H^3\cup(-H^2\cup H^2)}{F}= S\times[0,1].
\end{align}
Now because $S$ is compact then $\partial S = \emptyset$.
Consequently $N_F$ is a hyperbolic 3 - manifold with two compact
boundary component at infinity $S_0=H^2/F=S\times\{0\}$ and
$S_1=-H^2/F=S\times\{1\}$ with the same genus $p$.
\subsection{Coding of the points of $\Lambda(F)$ and the Geodesics } For
convenience and having a simple intuition
lets consider the unit disc model for $H^2$ in this section. We can
code points of $\Lambda(F)=S^1$ and geodesics with beginning and end
points on $\Lambda(F)=S^1$ as following:\\

Lets mark semicircles including sides of P by $c_1,...,c_{4p}$ in
the counter clockwise direction around $S^1$ and put $g_1=f_1,
g_2=f_{p+1}, g_3=f^{-1}_1, g_4=f^{-1}_{p+1}, g_5=f_2, g_6=f_{p+2},
g_7=f_2^{-1}, g_8=f_{p+2}^{-1}$, and so on. In general for
$k=0,...,p-1$, $g_{4k+1}=f_{k+1}, g_{4k+2}=f_{p+k+1},
g_{4k+3}=f^{-1}_{k+1}, g_{4k+4}=f^{-1}_{p+k+1}$. And label end
points of $c_i$ on $S^1$ by $P_i$ and $Q_{i+1}$   ( with
$Q_{4p+1}=Q_1$ ) with $P_i$ occurring  before $Q_{i+1}$ in the
counter clockwise direction, ( See figure \ref{Fig2.13} ). And
define
\begin{figure}[!hbtp]
\centerline{\includegraphics{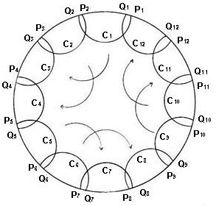}} \caption { }\label{Fig2.13}
\end{figure}
$$f_F : S^1 \rightarrow S^1$$
$$f_F (x) = g_i(x)\hspace{5mm} if\hspace{5mm} x\in [P_i,P_{i+1})$$
Then $f_F$ is a well defined map and is called a Markov map related
to the Fuchsian group $F$. We have the following lemma
\begin{lem} The map $f_F$ and the group $F$ are orbit equivalent
on $S^1$, namely except for the pairs $(Q_i, g_{i-1}Q_i)$, for
$i=1,2,...,4p$; for each x,y in $S^1$, $x=f(y)$ for some $f$ in $F$
if and only if there exists nonnegative integers $n,m$ such that
$f_F ^n(x)=f_F^m(y)$.
\end{lem}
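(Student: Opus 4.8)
The plan is to read the condition ``$f_F^n(x)=f_F^m(y)$ for some $n,m\ge 0$'' as a \emph{tail equivalence} relation, which I will write $x\sim y$, and to show that $\sim$ coincides with $F$-orbit equivalence away from the listed pairs. First I would record that $\sim$ really is an equivalence relation. Because $f_F$ is single-valued, once two forward orbits meet they stay merged: $f_F^n(x)=f_F^m(y)$ implies $f_F^{n+t}(x)=f_F^{m+t}(y)$ for all $t\ge 0$. This lets one raise both exponents at will, which gives symmetry and transitivity of $\sim$ immediately.

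The easy implication, $x\sim y\Rightarrow x=f(y)$ for some $f\in F$, I would read off from the definition of $f_F$. On each partition interval $I_i:=[P_i,P_{i+1})$ the map acts as the group element $g_i\in F$, so on each cylinder $f_F^n$ is a composition $g_{a_{n-1}}\cdots g_{a_0}\in F$, where $(a_0,\dots,a_{n-1})$ is the itinerary of the point. Hence $f_F^n(x)=f_F^m(y)$ forces $x=\big(g_{a_{n-1}}\cdots g_{a_0}\big)^{-1}\big(g_{b_{m-1}}\cdots g_{b_0}\big)\,y$, an element of $F$ applied to $y$.

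For the converse I would exploit that $F$ is generated by $g_1,\dots,g_{4p}$ (these include the $f_j$ and their inverses) together with the fact that $\sim$ is an equivalence relation, to reduce the claim to a single generator: it suffices to prove $y\sim g_i(y)$ for every generator $g_i$ and every non-grid point $y$, since induction on the word length of $f$ then yields $y\sim f(y)$ for all $f\in F$. The one-generator statement I would settle from the \emph{Markov property} of the partition, namely that each $g_i$ carries $I_i$ onto the complementary arc, $g_i(I_i)=S^1\setminus \overline{I_{i'}}$, where $i'$ is the index with $g_{i'}=g_i^{-1}$ (so $f_F$ acts as $g_i^{-1}$ on $I_{i'}$). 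Granting this there are two cases. If $y\in I_i$, then by definition $f_F(y)=g_i(y)$, so $f_F^1(y)=f_F^0(g_i(y))$ and $y\sim g_i(y)$. If instead $y\notin\overline{I_i}$, the Markov property gives $g_i\big(S^1\setminus\overline{I_i}\big)=I_{i'}$, whence $g_i(y)\in I_{i'}$ and $f_F(g_i(y))=g_{i'}(g_i(y))=g_i^{-1}g_i(y)=y$, so again $y\sim g_i(y)$, now with the roles of the exponents reversed.

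The main obstacle, and the one place where genuine geometry must be supplied, is the Markov property itself. I would prove it from the side-pairing structure of the fundamental polygon $P$ and the chosen labeling of the semicircles $c_1,\dots,c_{4p}$, showing that the generator $g_i$ attached to $c_i$ maps the exterior arc of $c_i$ exactly onto the interval $I_{i'}$ of the paired semicircle $c_{i'}$; this is the Fuchsian analogue of ping-pong and is precisely the mechanism driving the orbit equivalence. The remaining care is at the grid points, i.e. the $F$-orbit of the partition endpoints $P_i$ and the semicircle endpoints $Q_i$, where the half-open convention $I_i=[P_i,P_{i+1})$ renders the itinerary, and hence the case division above, ambiguous. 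The pairs $(Q_i,g_{i-1}Q_i)$ are exactly these boundary coincidences: there the convention forces a choice of generator that the clean one-generator reduction cannot absorb, and for this reason they are excluded from the statement.
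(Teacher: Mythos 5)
The paper does not actually prove this lemma; it simply cites Bowen--Series \cite{BS}, so your attempt has to be measured against that argument. Your skeleton is the right one and agrees with theirs: read the condition as a tail equivalence $\sim$, get the easy direction from the fact that $f_F^n$ acts on each cylinder by a word in the $g_i$, and reduce the converse by induction on word length to the single statement $y\sim g_i(y)$ for each generator. The gap is in the one step you yourself flagged as needing geometry: the ``Markov property'' you posit, $g_i(I_i)=S^1\setminus\overline{I_{i'}}$ with $I_i=[P_i,P_{i+1})$, is false in this setting, and the two-case argument built on it does not close. The partition intervals $I_i$ are not the arcs cut off by the geodesics $c_i$. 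The geodesic $c_i$ subtends the arc $A_i=[P_i,Q_{i+1}]$, consecutive such arcs overlap (the cyclic order of the labels is $\dots,P_i,Q_i,P_{i+1},Q_{i+1},\dots$), and $I_i$ is $A_i$ truncated at the left endpoint of the next arc. What the side-pairing actually gives is $g_i(A_i)=S^1\setminus A_{i'}$ and $g_i(S^1\setminus A_i)=A_{i'}$; nothing forces $g_i$ to respect the $P$-partition.

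Concretely, your Case 2 fails in two ways. If $y$ lies in the overlap $(P_{i+1},Q_{i+1}]$, then $y\notin\overline{I_i}$ but $y$ is still on the $c_i$ side, so $g_i(y)$ lands in $S^1\setminus A_{i'}$ and in particular nowhere near $I_{i'}$. Even for $y$ outside $A_i$, the image $g_i(y)$ lies in $A_{i'}$, which contains the overlap $[P_{i'+1},Q_{i'+1})\subset I_{i'+1}$; if $g_i(y)$ falls there, $f_F$ applies $g_{i'+1}$ rather than $g_i^{-1}$ and you do not return to $y$ in one step. Handling exactly these overlap intervals is the substance of the Bowen--Series proof: points there are carried around the vertex cycles of the fundamental polygon by iterates of $f_F$, and one uses the relation $\prod_{i}[f_i,f_{p+i}]=I$ at each vertex to show that the two forward orbits eventually synchronize. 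The exceptional pairs $(Q_i,g_{i-1}Q_i)$ are the endpoints of these overlap arcs, for which the synchronization never occurs; note that $Q_i$ lies in the \emph{interior} of $I_i$, so they are not artifacts of the half-open convention at the points $P_i$ as you suggest. To complete your proof you would need to supply this vertex-cycle analysis (or impose an ``even corners'' hypothesis on $P$, under which $f_F$ is genuinely Markov for a refinement of the partition $\{[P_i,P_{i+1})\}$ and the bookkeeping becomes manageable).
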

\begin{proof} ( See \cite{BS}).\end{proof}
Now label each arc $[P_i, P_{i+1})$ by $g_i$, and for each element
$x$ in $S^1$ put $x=(...,x_2,x_1,x_0 )$. Where the component $x_n$
is the label of the segment to which $f_F ^n(x)$ belongs. We know
that for each generator $f_i$ of $F$, the fixed point $Z^+(f_i)$ is
in the circle $a_i'\cup(-a_i')$ and $Z^-(f_i)$ is in $a_i\cup(-a_i)$
then from the definition of Markov map we have
\begin{align}
Z^+(f_i)=(...,f_i^{-1},f_i^{-1},f_i^{-1} )\qquad and\qquad
Z^-(f_i)=(...,f_i,f_i,f_i )\label{F2.10}
\end{align}
 Next, if $\{x, y\}$ be a geodesic with the beginning
point $x\in S^1$ and end point $y=(...,y_2,y_1,y_0 )\in S^1$ then
put
\begin{align}
 \{x, y\} = (...,x_2,x_1,x_0,y_0^{-1},y_1^{-1},y_2^{-1},...).
\end{align}
Now for each $h\in F$  let $\gamma_h=\{z^+(h),z^-(h)\}$ show the
geodesic arc between $z^+(h)$ and $z^-(h)$ (the axis of $h$) and
${\overline\gamma}_h$ the image of $\gamma_h$ in $N_F$. Then we have
the following geodesics in $N_F$
\begin{itemize}
    \item{Closed geodesics: a geodesic in $N_F$ is closed if and only if it's the projection of the axis
    of a hyperbolic element in $F$},
    \item{Images of the geodesics with beginning and end points in $\Lambda(F)$},
    \item{Geodesics with beginning points in $\partial N_F$ that are limit cycle to
    ${\overline\gamma}_{f_i}$,} for a generator $f_i$ of $F$,
    \item{Geodesics with beginning and end points in $S_0$ or $S_1$.},
    \item{Geodesics with beginning point in $S_0$ and end point in $S_1$ and vis
    versa.}
\end{itemize}
\subsection{Schottky groups associated to the Fuchsian groups} For
the Riemann surface $S$ and associated Fuchsian group $F$ as above
let $N$ be the smallest normal subgroup of $F$ including $f_i$ for
$i=p+1,...,2p$. Then it's obvious that the factor group $F/N$ is a
free group generated by $p$ generators. If we consider the covering
$\tilde{S}\rightarrow S$ associated to $N$, then from normality of
$N$ the group of deck transformations of this cover is $F/N$ and
according to the classical Koebe uniformization theorem \cite{K}(
see also \cite{F})there is a planar region
$\Omega\subset\mathbb{\hat{C}}$ that is a region of discontinuity of
a Schottky group $\Gamma = <\gamma_1,\gamma_2,..., \gamma_p>$ and
$F/N\simeq \Gamma$. $\tilde{S}$ is covering isomorphic to
$\Omega=\Omega(\Gamma)$ and for coverings $J:H^2\longrightarrow
\Omega(\Gamma)$, $\pi_\Gamma:\Omega(\Gamma)\longrightarrow
\Omega(\Gamma)/\Gamma$ and $\pi_F:H^2\longrightarrow H^2/F$ we have
$\pi_\Gamma \circ J=\pi_F$ i.e. the following diagram is commutative
\begin{center}
$H^2\hspace{5mm}\overset{J}{\longrightarrow}\hspace{5mm}\Omega(\Gamma)$\\
$\pi_F\searrow\hspace{7mm}\swarrow \pi_\Gamma$\\
$S$
\end{center}
 Also we have  $J\circ f_i=\gamma_i\circ J$ for
$i=1,...,p$ and $J\circ f=J$ for each $f$ in $N$ and each mappings
is complex-analytic covering and $\Gamma$ is uniquely determined to
within conjugation in $PGL(2,\mathbb{C})$ \cite{Z}.\\

For an extension of $J$ to a set some more than  $H^2$ that we need
it in the later computations, lets denote by $F_0$ the free subgroup
of $F$ generated by $f_1,...,f_p$. We know that $\Lambda(F_0)$ is a
subset of $\Lambda(F)$. $\Lambda(F_0)$ is $F_0$ invariant and is out
of the closer of the fundamental domain of $F_0$, then all points of
$\Lambda(F_0)$ are in only the circles $C_i$ that are related to the
generators of $F_0$ and their inverses. This shows that the coding
of each point in $\Lambda(F_0)$ includes only the generators of
$F_0$ and their inverses. Also since each generator of $F$ like $f$
is an isometry of $H^2$, it's a composition of two reflections, then
$f(x)$ is not in the isometry circle of $f^{-1}$ for each point $x$
in $S^1$. Then by the definition of the Markov map $f_F$ all the
codings of the points of $S^1$ are irreducible. This means that each
point $x$ in $\Lambda(F_0)$ can be coded by the irreducible formal
combination
$x=(...,f_{i_2}^{\epsilon_{i_2}},f_{i_1}^{\epsilon_{i_1}},f_{i_0}^{\epsilon_{i_0}})$
for $i_j\leq p$. This Fuchsian coding is suitable for expressing
geodesics in $N_F$ or $S_F$ when the beginning and end points of the
geodesics are in $\Lambda(F_0)$ and also can be used for extending
the map $J$ on $\Lambda(F_0)$ and maybe on some more.\\

Now we want to extend $J$ to $H^2\cup\Lambda(F_0)$ ( onto
$\overline{\Omega(\Gamma)}=\hat{\mathbb{C}}$ ). For this, we know
that each element of $\Lambda(\Gamma)$ can be coded by the infinite
words
$z=...\gamma_{i_2}^{\epsilon_{i_2}}\gamma_{i_1}^{\epsilon_{i_1}}\gamma_{i_0}^{\epsilon_{i_0}}(z_0)$
for a constant $z_0$ in $\Omega(\Gamma)$ and $\epsilon_{i}=\pm 1$.
Similarly,  since $F_0$ is free and purely loxodromic group then
it's a Schottky group too and we can use Schottky coding for it too.
For conveniences in some proofs we will use Schottky codings for the
points of $\Lambda(F_0)$. Then each point $x$ in $\Lambda(F_0)$ can
be coded by the infinite words
$x=...f_{i_2}^{\epsilon_{i_2}}f_{i_1}^{\epsilon_{i_1}}f_{i_0}^{\epsilon_{i_0}}(x_0)$
for $i_j\leq p$ and a constant point $x_0$ in
$\Omega(F_0)\subset\hat{\mathbb{C}}$ ( also $H^2\subset\Omega(F_0)$
). Then we can consider $x_0$ in $\Omega(F_0)\cap H^2$ such that
$z_0=J(x_0)$. Also from (\ref{F2.10}) and using the properties of a
loxodromic element and invariance of the limit set under $F$ ( or
$F_0$ ) it is not hard to show that on $\Lambda(F_0)$ we have the
following relation between Fuchsian and Schottky coding
\begin{align}
...f_{i_k}^{\epsilon_{i_k}}...f_{i_1}^{\epsilon_{i_1}}f_{i_0}^{\epsilon_{i_0}}f_{i_k}^{\epsilon_{i_k}}
...&f_{i_1}^{\epsilon_{i_1}}f_{i_0}^{\epsilon_{i_0}}(x_0)=Z^+(f_{i_k}^{\epsilon_{i_k}}...f_{i_1}^{\epsilon_{i_1}}f_{i_0}^{\epsilon_{i_0}})\nonumber\\
&=(...,f_{i_0}^{-\epsilon_{i_0}},...,f_{i_{k-1}}^{-\epsilon_{i_{k-1}}},f_{i_k}^{-\epsilon_{i_k}}
,f_{i_0}^{-\epsilon_{i_0}},...,f_{i_{k-1}}^{-\epsilon_{i_{k-1}}},f_{i_k}^{-\epsilon_{i_k}})\label{F2.9}
\end{align}
And also this is true for repelling points too because of the
relation $Z^-(f)=Z^+(f^{-1})$ for each $f$ in $F$ . Now, lets put
\begin{align}
J(...f_{i_2}^{\epsilon_{i_2}}f_{i_1}^{\epsilon_{i_1}}f_{i_0}^{\epsilon_{i_0}}(x_0))
=...\gamma_{i_2}^{\epsilon_{i_2}}\gamma_{i_1}^{\epsilon_{i_1}}\gamma_{i_0}^{\epsilon_{i_0}}(z_0)
\end{align}
in other words, by the definition of infinite words, the continuity
of $J$ and the relation $J\circ f(x)=\gamma\circ J(x)$  on $H^2$
\begin{align}
J(...f_{i_2}^{\epsilon_{i_2}}f_{i_1}^{\epsilon_{i_1}}f_{i_0}^{\epsilon_{i_0}}(x_0))
&=J(\lim_{j\rightarrow\infty}f_{i_j}^{\epsilon_{i_j}}...f_{i_2}^{\epsilon_{i_2}}f_{i_1}^{\epsilon_{i_1}}f_{i_0}^{\epsilon_{i_0}}(x_0))\nonumber\\
&=\lim_{j\rightarrow\infty}Jf_{i_j}^{\epsilon_{i_j}}...f_{i_2}^{\epsilon_{i_2}}f_{i_1}^{\epsilon_{i_1}}f_{i_0}^{\epsilon_{i_0}}(x_0)\nonumber\\
&=\lim_{j\rightarrow\infty}\gamma_{i_j}^{\epsilon_{i_j}}...\gamma_{i_2}^{\epsilon_{i_2}}\gamma_{i_1}^{\epsilon_{i_1}}
\gamma_{i_0}^{\epsilon_{i_0}}(J(x_0))\nonumber\\
&=...\gamma_{i_2}^{\epsilon_{i_2}}\gamma_{i_1}^{\epsilon_{i_1}}\gamma_{i_0}^{\epsilon_{i_0}}(z_0)\nonumber\\
\end{align}
Then the map $J:\Lambda(F_0)\rightarrow \Lambda(\Gamma)$ is well
defined and  onto. Also from (\ref{F2.9}) we see that we can explain
$J$ and consequently the functions ( spatially Green's function ) on
$S_0$  by the Fuchsian coding, when these functions  are expressed
via this map.\\

For each point $x$ in $\Lambda(F_0)$ and element $f$ in $F_0$
equivalent to $\gamma$ (i.e. $
f=f_{i_k}^{\epsilon_{i_k}}...f_{i_1}^{\epsilon_{i_1}}f_{i_0}^{\epsilon_{i_0}}\sim
\gamma=\gamma_{i_k}^{\epsilon_{i_k}}...\gamma_{i_1}^{\epsilon_{i_1}}\gamma_{i_0}^{\epsilon_{i_0}}$,
where $i_j\leq p$ and  $f_i$ is replaced by $\gamma_i$ and vice
versa ) we have
\begin{align}
J\circ
f(...f_{i_2}^{\epsilon_{i_2}}f_{i_1}^{\epsilon_{i_1}}f_{i_0}^{\epsilon_{i_0}}(x_0))
=\gamma\circ
J(...f_{i_2}^{\epsilon_{i_2}}f_{i_1}^{\epsilon_{i_1}}f_{i_0}^{\epsilon_{i_0}}(x_0))\nonumber
\end{align}
Then $J\circ f=\gamma\circ J$ and $J(z^\pm(f))=z^\pm(\gamma)$.
\subsection{Automorphic  Functions on $S_0$} Let $F_0$ be the free group
generated by $f_1,...,f_p$ and $D=\Sigma m_x(x)$ be a divisor with
support $|D|$ in $H^2$. Put $D_0:=\Sigma m_{x}(Jx)$ and  again lets
denote by $w_A$ a meromorphic function on $\mathbb{P}^1(\mathbb{C})$
with the divisor $A$, and for an element $x_0$ in $H^2\backslash
\bigcup_{f\in F_0}f(|D|)$ define:
\begin{align}
W_{D,x_0}(x)=\prod_{f\in
F_0}\frac{w_{D_0}(Jf(x))}{w_{D_0}(Jf(x_0))}.\label{F2.91}
\end{align}
\begin{thm} For the Schottky group  $\Gamma$ associated to the Fuchsian group $F$, if $a(\Gamma)<1$, then
the product (\ref{F2.91}) converges absolutely
and uniformly on any compact subset of $H^2$, after deleting a
finite number of factors that may have a pole or zero on  this
subset.
\end{thm}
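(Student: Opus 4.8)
The plan is to transport the entire product to the Schottky side, where the relation $J\circ f=\gamma\circ J$ turns it into a product over $\Gamma$, and then to reduce its convergence to that of the Poincar\'e series of $\Gamma$ at $s=1$, which is available precisely because $a(\Gamma)<1$. Writing $z=J(x)$ and $z_0=J(x_0)$ and using the isomorphism $F_0\simeq\Gamma$ together with $J\circ f=\gamma\circ J$, each factor becomes $w_{D_0}(\gamma z)/w_{D_0}(\gamma z_0)$, so that
\begin{align}
W_{D,x_0}(x)=\prod_{\gamma\in\Gamma}\frac{w_{D_0}(\gamma z)}{w_{D_0}(\gamma z_0)}.\nonumber
\end{align}
First I would dispose of the finitely many exceptional factors. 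Fix a compact $K\subset H^2$; then $J(K)$ is a compact subset of $\Omega(\Gamma)$, while $|D_0|$ is a finite subset of $\Omega(\Gamma)$. Since every $\Gamma$-orbit accumulates only on $\Lambda(\Gamma)$, the translates $\gamma^{-1}|D_0|$ meet the compact set $J(K)$ for only finitely many $\gamma$; these are exactly the factors whose numerator can have a zero or pole on $K$, and they are the ones we delete. The hypothesis $x_0\notin\bigcup_{f\in F_0}f(|D|)$ guarantees, via $J\circ f^{-1}=\gamma^{-1}\circ J$, that $\gamma z_0\notin|D_0|$ for every $\gamma$, so no denominator ever vanishes.

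For the remaining factors I would write each as $1+u_\gamma$ with $u_\gamma=\big(w_{D_0}(\gamma z)-w_{D_0}(\gamma z_0)\big)/w_{D_0}(\gamma z_0)$ and invoke the standard criterion that the product converges absolutely and uniformly as soon as $\sum_\gamma|u_\gamma|$ does. Choose a compact neighborhood $U$ of $\Lambda(\Gamma)$ with $\overline U\cap|D_0|=\emptyset$; on $\overline U$ the function $w_{D_0}$ is holomorphic and zero-free, so $|w_{D_0}|$ is bounded below and $|w_{D_0}'|$ above. For all but finitely many $\gamma$ the points $\gamma z,\gamma z_0$ and the segment joining them lie in $U$, whence the mean value estimate gives $|u_\gamma|\le C\,|\gamma z-\gamma z_0|$ with $C$ independent of $\gamma$ and of $z\in J(K)$.

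It then remains to bound the displacement $|\gamma z-\gamma z_0|$ by the Poincar\'e term $|\gamma'(z_0)|$. Enclosing $J(K)\cup\{z_0\}$ in a compact connected set $\tilde K\subset\Omega(\Gamma)$ of bounded diameter and integrating $\gamma'$ along a path of bounded length $L$ in $\tilde K$ gives $|\gamma z-\gamma z_0|\le L\,\sup_{\tilde K}|\gamma'|$; the bounded-distortion property of the $\Gamma$-action on $\tilde K$ then yields $\sup_{\tilde K}|\gamma'|\le C'\,|\gamma'(z_0)|$ uniformly in $\gamma$. Combining these estimates,
\begin{align}
\sum_{\gamma\in\Gamma}|u_\gamma|\le C L C'\sum_{\gamma\in\Gamma}|\gamma'(z_0)|<\infty,\nonumber
\end{align}
since the last series is the Poincar\'e series at $s=1>a(\Gamma)$, which converges uniformly on compact subsets of $\Omega(\Gamma)$. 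As the bound is independent of $x\in K$, the convergence of $W_{D,x_0}$ is absolute and uniform on $K$, as claimed. The main obstacle I anticipate is the uniform distortion estimate $\sup_{\tilde K}|\gamma'|\le C'\,|\gamma'(z_0)|$: everything hinges on controlling $|\gamma z-\gamma z_0|$ by a single Poincar\'e term with a constant independent of $\gamma$, and on guaranteeing that the points and their connecting segments stay in the zero-free region $U$ once the finitely many bad $\gamma$ have been removed.
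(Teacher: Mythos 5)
Your proposal is correct and follows essentially the same route as the paper: delete the finitely many factors whose translated divisor meets the compact set, linearize each remaining factor to an estimate of the form $c\,|\gamma z-\gamma z_0|$, and dominate the resulting sum by the Poincar\'e series at $s=1$, which converges uniformly on compact subsets of $\Omega(\Gamma)$ because $a(\Gamma)<1$. The only divergence is in the final displacement bound: where you integrate $\gamma'$ along a path and invoke a uniform distortion estimate (the step you rightly flag as the delicate one), the paper uses the exact identity $\gamma(z)-\gamma(z_0)=(z-z_0)/\big((cz+d)(cz_0+d)\big)$ for a normalized M\"obius map, i.e. $|\gamma z-\gamma z_0|=|z-z_0|\,|\gamma'(z)|^{1/2}|\gamma'(z_0)|^{1/2}\le\tfrac12|z-z_0|\big(|\gamma'(z)|+|\gamma'(z_0)|\big)$, which makes the distortion lemma unnecessary.
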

\begin{proof} Let $K$ be a compact subset of $H^2$. Since $F$ acts on $H^2$ properly discontinuously
the set $K_{F_0}=\{f\in F_0|f(K)\cap |D|\neq\emptyset\}$ is finite.
If we delete the set $K_{F_0}$ from the index set of the product
(\ref{F2.91}) then for each $f\in F_0\setminus K_{F_0}$ when $f(x)$
and $f(x_0)$ lie outside a fixed compact neighborhood of $|D|$ we
have
\begin{align}
\biggm|\frac{w_{D_0}(Jf(x))}{w_{D_0}(Jf(x_0))}-1\biggm|&\leq
c|Jf(x)-Jf(x_0)|=c|\alpha(z)-\alpha(z_0)|\nonumber\\
&\leq\frac{c}{2}|z-z_0|\biggm|\frac{d\alpha(z)}{dz}+\frac{d\alpha(z_0)}{dz}\biggm|\nonumber
\end{align}
Where $c$ is a constant, $z=J(x)$,  $z_0=J(x_0)$ and
$f\sim\alpha=\left(\begin{array}{cc}a&b\\c&d\end{array}\right)$ (
and $ad-bc=1$). The last inequality comes from the equality
$\frac{1}{|cz+d|^2}=|\frac{d\alpha(z)}{dz}|$. Now since
$a(\Gamma)<1$ the series
\begin{align}
\sum_{\gamma\in\Gamma}\biggm|\frac{d\gamma(z)}{dz}\biggm|
\end{align}
converges uniformly on the compact subsets of $\Omega(\Gamma)$. Then
the product (\ref{F2.91}) is convergent.
\end{proof}
In $W_{D,x_0}(x)$, if we change the point $x_0$ to $x_1$, then  we
have $W_{D,x_0}(x)=C^{x_1}_{x_0}W_{D,x_1}(x)$. Where $C^{x_1}_{x_0}$
is a nonzero complex number that depends on the points $x_0$ and
$x_1$ and $C^{x_1}_{x_0}C^{x_0}_{x_1}=1$. Also, for $f\in F$ and
$x\in H^2$ we have
$$W_{D,x_0}(fx)=\prod_{h\in F_0}\frac{w_{D_0}(Jh(x_0)))}{w_{D_0}(Jhf^{-1}
(x_0))}\prod_{h\in F_0}
\frac{w_{D_0}(Jh(x))}{w_{D_0}(Jh(x_0))}=\mu_D(f)W_{D,x_0}(x).$$
$\mu_D(f)$ is a nonzero complex number multiplicative on $D$ and $f$
and also independent of $x_0$. We can see this as following
$$\mu_D^{x_0}(f)W_{D,x_0}(x)=W_{D,x_0}(fx)=C^{x_1}_{x_0}\mu_D^{x_1}(f)C^{x_0}_{x_1}W_{D,x_0}(x)=\mu_D^{x_1}(f)W_{D,x_0}(x)$$
Then $\mu_D^{x_0}(f)=\mu_D^{x_1}(f)$.  When $\Gamma$ is a cyclic
group $\mu_D(f)=1$. This shows that $W_{D,x_0}$ is not $F$
automorphic function on $H^2$ in general.
\begin{thm} a) Lets denote by $C(f|g)$ a set of the representatives of
$(f^n)\backslash F_0/(g^n)$. Then for $f\neq g~ mod[F_0,F_0]$ in
$F_0$ we have
\begin{align}
\mu_{(g(x_1))-(x_1)}(f)&=\prod_{h\in
F_0}\frac{w_{(Jg(x_1))-(Jx_1)}(J(h(x_0)))}{w_{(Jg(x_1))-(Jx_1)}(J(h\circ
f^{-1}(x_0)))}\nonumber\\
&=\prod_{h\in
C(f|g)}\frac{w_{(J(z^+(f)))-(J(z^-(f)))}(Jh(z^+(g)))}{w_{(J(z^+(f)))-(J(z^-(f)))}(Jh(z^-(g)))}\label{F2.11}
\end{align}
And by defining  $Q(f):=\langle J(z^+(f)), J(z^-(f)), J(f(x_1)),
J(x_1)\rangle$ we have
\begin{align}
\mu_{(f(x_1))-(x_1)}(f)=Q(f)\prod_{h\in
C_0(f|f)}\frac{w_{(J(z^+(f)))-(J(z^-(f)))}(Jh(z^+(f)))}{w_{(J(z^+(f)))-(J(z^-(f)))}(Jh(z^-(f)))}\label{F2.12}
\end{align}
Where $C_0(f|f)$ is the set $C(f|f)$ without the identity class.\\
b) Lets denote by $C(|f)$ a set of representatives of $F_0/(f^n)$
and by $S(f)$ the conjugacy class of $f$ in $F_0$. Then for some
$x_1$ in $H^2$ such that $z_1=J(x_1)$ stays in
$\Omega(\Gamma)\setminus \Gamma_\infty$, we have
\begin{align}
W_{(f(x_1))-(x_1),x_0}(x)&= \prod_{h\in
C(|f)}\frac{w_{(Jh(z^+(f)))-(Jh(z^-(f)))} (J(x))}
{w_{(Jh(z^+(f)))-(Jh(z^-(f)))} (J(x_0))}\nonumber\\
&=\prod_{h\in S(f)}\frac{w_{(J(z^+(h)))-(J(z^-(h)))}
(J(x))}{w_{(J(z^+(h)))-(J(z^-(h)))} (J(x_0))}.\label{T2.11}
\end{align}
And this is independent of $x_1$.\end{thm}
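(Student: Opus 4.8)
The plan is to push both products onto the Schottky group $\Gamma$ through the intertwining map $J$ and then to collapse them by telescoping over cosets, in exactly the manner of the Schottky computations (\ref{F1.10})--(\ref{F1.11}) of the first chapter. Throughout I use the standard properties of the cross--ratio (\ref{F1.5}): the symmetry $\langle a,b,c,d\rangle=\langle c,d,a,b\rangle$, the M\"obius invariance $\langle \phi a,\phi b,\phi c,\phi d\rangle=\langle a,b,c,d\rangle$ for $\phi\in\mathrm{PGL}(2,\mathbb{C})$, and the fact that $w_{A+B}$ and $w_Aw_B$ agree up to a constant. For $h\in F_0$ let $\gamma_h\in\Gamma$ denote its image under $F_0\cong\Gamma$; as established above, $J\circ h=\gamma_h\circ J$ and $J(z^{\pm}(h))=z^{\pm}(\gamma_h)$, so that for any point $y$ one has $J(hf^{n}(y))=\gamma_h\gamma_f^{\,n}(Jy)\to J(h\,z^{\pm}(f))$ as $n\to\pm\infty$. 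Since $a(\Gamma)<1$, the products defining $W_{D,x_0}$ and $\mu_D(f)$ converge absolutely by the previous theorem, so I may reindex freely over $F_0$ and regroup the factors into cosets.

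For part (a), the first equality is only the definition of the multiplier: reindexing $h\mapsto hf$ inside (\ref{F2.91}) gives $\mu_{(g(x_1))-(x_1)}(f)=\prod_{h\in F_0}w_{D_0}(Jh(x_0))/w_{D_0}(Jhf^{-1}(x_0))$ with $D_0=(Jg(x_1))-(Jx_1)$. For the second equality I write each factor as $\langle Jg(x_1),Jx_1,Jh(x_0),Jhf^{-1}(x_0)\rangle$ and telescope twice. Grouping $F_0$ into left cosets of $\langle f\rangle$ and letting $k\to\pm\infty$ in $h=h_0f^{k}$ collapses each coset to $\langle Jg(x_1),Jx_1,Jh_0z^{+}(f),Jh_0z^{-}(f)\rangle$, i.e. replaces the base point by the fixed points of $f$. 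Applying the symmetry and then $\gamma_{h_0}^{-1}$ (M\"obius invariance) turns this into $\langle Jz^{+}(f),Jz^{-}(f),J(h_0^{-1}g\,x_1),J(h_0^{-1}x_1)\rangle$, and a second telescoping over right cosets of $\langle g\rangle$ sends $x_1,gx_1$ to the fixed points of $g$. The surviving index set is the double--coset space $(f^{n})\backslash F_0/(g^{n})=C(f|g)$, yielding (\ref{F2.11}). In the diagonal case $g=f$ the identity double coset consists of a single left coset (because, $F_0$ being free and $f$ primitive, the centralizer of $f$ is $\langle f\rangle$), so no second telescoping occurs there; after the first telescoping it contributes the single regular factor $\langle Jz^{+}(f),Jz^{-}(f),Jf(x_1),Jx_1\rangle=Q(f)$, while the non--identity double cosets telescope as before over $C_0(f|f)$. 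This gives (\ref{F2.12}).

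For part (b) I start from $W_{(f(x_1))-(x_1),x_0}(x)=\prod_{h\in F_0}\langle Jf(x_1),Jx_1,Jh(x),Jh(x_0)\rangle$ and apply $\gamma_h^{-1}$ by M\"obius invariance to move the group element into the divisor, obtaining the factor $w_{(\gamma_h^{-1}\gamma_f z_1)-(\gamma_h^{-1}z_1)}(Jx)/w_{(\gamma_h^{-1}\gamma_f z_1)-(\gamma_h^{-1}z_1)}(Jx_0)$, where $z_1=J(x_1)$. Reindexing $k=h^{-1}$ and grouping $\Gamma$ into right cosets of $\langle\gamma_f\rangle$, write $\gamma_k=\gamma_{k_0}\gamma_f^{\,n}$; then the divisors telescope through the multiplicativity of $w$, since $\sum_n[(\gamma_{k_0}\gamma_f^{\,n+1}z_1)-(\gamma_{k_0}\gamma_f^{\,n}z_1)]=(\gamma_{k_0}z^{+}(\gamma_f))-(\gamma_{k_0}z^{-}(\gamma_f))$, and the product over $C(|f)=F_0/\langle f\rangle$ in (\ref{T2.11}) drops out. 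To pass to the conjugacy class I use once more that the centralizer of the primitive $f$ is $\langle f\rangle$, so $k_0\langle f\rangle\mapsto k_0 f k_0^{-1}$ is a bijection of $F_0/\langle f\rangle$ onto $S(f)$ with $J(k_0z^{\pm}(f))=J(z^{\pm}(k_0fk_0^{-1}))$, giving the second form. Since the resulting expression no longer contains $z_1$, it is independent of $x_1$.

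The main obstacle is the rigorous justification of the telescopings: one must interchange the infinite products with the limits $n\to\pm\infty$ defining $z^{\pm}$ and verify that the boundary terms produce exactly the fixed--point evaluations with no stray factor. Absolute convergence from the previous theorem legitimizes the rearrangement into cosets, while the continuity of $w_{D_0}$ away from $|D_0|$, together with the hypothesis $z_1\in\Omega(\Gamma)\setminus\Gamma_\infty$ (which keeps the orbit $\gamma_f^{\,n}z_1$ off the zeros and poles and lets it converge to $z^{\pm}(\gamma_f)$), supplies the limit interchange. The subtlest point is the diagonal subcase of (a): there $J(f^{n}x_1)$ tends to the fixed points of $f$, which are the very points appearing in the function $w_{(Jz^{+}(f))-(Jz^{-}(f))}$, so the identity double coset must be isolated and evaluated by the first telescoping only, producing $Q(f)$ rather than a divergent product. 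The hypothesis $f\neq g\bmod[F_0,F_0]$ in (\ref{F2.11}) is precisely what excludes this collision in the off--diagonal case, by separating the fixed--point sets of $f$ and $g$.
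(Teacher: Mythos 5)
Your argument is correct and follows essentially the same route as the paper's proof: writing each factor as a cross--ratio, pushing everything through $J$ via $J\circ h=\gamma_h\circ J$ and M\"obius invariance, and collapsing the product by a double telescoping over the double cosets $(f^n)\backslash F_0/(g^n)$ so that orbit points are replaced by the fixed points $z^{\pm}(f)$, $z^{\pm}(g)$. The only differences are cosmetic --- you perform the two telescopings in the opposite order, and you spell out the diagonal case (\ref{F2.12}), part (b) via $h(z^{\pm}(g))=z^{\pm}(hgh^{-1})$, and the convergence justification, which the paper dispatches with ``can be proved similarly.''
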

\begin{proof} Lets put $J(g(x_1))=a$, $J(x_1)=b$ and
$w_{(a)-(b)}(x)=\frac{a-x}{b-x}$ then we have
\begin{align}
\mu_{(g(x_1))-(x_1)}(f)&=\prod_{h\in
F_0}\frac{a-Jh(x_0)}{b-Jh(x_0)}\big/\frac{a-Jhf^{-1}(x_0)}{b-Jhf^{-1}(x_0)}\nonumber\\
&=\prod_{h\in
C(f|g)}\prod_{m=-\infty}^{m=\infty}\prod_{n=-\infty}^{n=\infty}\frac{a-J(f^{-m}hg^n(x_0))}{a-J(f^{-m}hg^{n-1}(x_0))}
\frac{b-J(f^{-m}hg^{n-1}(x_0))}{b-J(f^{-m}hg^{n}(x_0))}\nonumber\\
&=\prod_{h\in
C(f|g)}\prod_{m=-\infty}^{m=\infty}\prod_{n=-\infty}^{n=\infty}\frac{A_n}{A_{n-1}}
\frac{B_{n-1}}{B_n}\nonumber\\
&=\prod_{h\in
C(f|g)}\prod_{m=-\infty}^{m=\infty}\frac{A_\infty}{A_{-\infty}}
\frac{B_{-\infty}}{B_\infty}\qquad\left(\prod_{-N}^{N}\frac{A_n}{A_{n-1}}\frac{B_{n-1}}{B_n}=\frac{A_N}{A_{-N-1}}\frac{B_{-N-1}}{B_N}\right)\nonumber\\
&=\prod_{h\in
C(f|g)}\prod_{m=-\infty}^{m=\infty}\frac{a-J(f^{-m}h(z^+(g))}{a-J(f^{-m}h(z^-(g)))}
\frac{b-J(f^{-m}h(z^-(g)))}{b-J(f^{-m}h(z^+(g)))}\nonumber\\
&=\prod_{h\in
C(f|g)}\prod_{m=-\infty}^{m=\infty}\frac{J(f^{m+1}(x_1))-J(h(z^+(g))}{J(f^{m+1}(x_1))-J(h(z^-(g)))}
\frac{J(f^{m}(x_1))-J(h(z^-(g)))}{J(f^{m}(x_1))-J(h(z^+(g)))}\label{F2.8}\\
&=\prod_{h\in
C(f|g)}\frac{J(z^+(f))-J(h(z^+(g))}{J(z^-(f))-J(h(z^+(g)))}
\frac{J(z^-(f))-J(h(z^-(g)))}{J(z^+(f))-J(h(z^-(g)))}\nonumber\\
&=\prod_{h\in
C(f|g)}\frac{w_{(J(z^+(f)))-(J(z^-(f)))}(Jh(z^+(g)))}{w_{(J(z^+(f)))-(J(z^-(f)))}(Jh(z^-(g)))}\nonumber
\end{align}
The part (\ref{F2.8}) comes from the equation $J\circ g=\gamma\circ
J$ and the invariance of the Cross-Ratio on the action of Mobius
transformations. The second part of a) and b) can be proved
similarly. For b) we should consider that
$h(z^{\pm}(g))=z^{\pm}(hgh^{-1})$.
\end{proof}
In above theorem in the case that the divisor is $(a)-(b)$ we have
\begin{align}
\mu_{(a)-(b)}(f)=\prod_{h\in
S(f)}\frac{w_{(J(a))-(J(b))}(J(z^+(h)))}{w_{(J(a))-(J(b))}(J(z^-(h)))}\label{F2.13}.
\end{align}
By previous theorem $W_{(f(x_1))-(x_1),x_0}(x)$ is a meromorphic
function without any poles and zeroes in $H^2$ then it is a
holomorphic function on $H^2$. Also as we see in the expression, it
is independent of the point $x_1$.
\subsection{Differentials of the first kind on $S_0$} For each $f$ in $F_0$ lets put
$$\omega_f = d\log W_{(f(x_1))-(x_1),x_0}(x).$$
However $W_{D,x_0}$ is not $F$- automorphic function on $H^2$ in
general but we have $$d\log W_{D,x_0}(fx)=d\log W_{D,x_0}(x).$$ And
this shows that the differential $d\log W_{D,x_0}(x)$ is $F$
automorphic. Then $\omega_f$ is a differential of the first kind on $S_0$.\\

According to the classical theorem of cuts we can choose
$\Omega(\Gamma)$ the region of discontinuity of the Schottky group
$\Gamma$ with the marking $\{D_1,...,D_{2p}; \gamma_1,\gamma_2,...,
\gamma_p \}$ with $C_i=\partial D_i$ such that $\bar{a}_i$ the image
of $a_i$ for $i=1,...,p$ in $S_0$ be coincident with the image of
$C_{p+i}$. And these together with $\bar{b}_i$ the image of $b_i$
for $i=1,...,p$ in $S_0$ make a canonical base for $H_1(S_0,\mathbb
Z)$ i.e.
\begin{align}
(\bar{a}_i,\bar{a}_j)=(\bar{b}_i,\bar{b}_j)=0\hspace{1cm}and
 \hspace{1cm}(\bar{a}_i,\bar{b}_j)=\delta_{ij}.
\end{align}
\begin{prop} a)  $\{\omega_{f_i}\}_1^p$ is a Riemann's basis for
the space of differentials of the first kind on $S_0$ by choosing
the previous base for $H_1(S_0,\mathbb Z)$ i.e.
\begin{align}
\frac{1}{2\pi i}\int_{\bar{a}_j}\omega_{f_i}=\delta_{ij}.
\end{align}
 b) If we denote by $C(f|g)$ a set of the representatives
of $(f^n)\backslash F_0/(g^n)$ and by $C_0(f|g)$, the set $C(f|g)$
without the identity class, Then for $i\neq j$ we have
\begin{align}
\tau_{ij}:=\int_{\bar{b}_j}\omega_{f_i}=\sum_{h\in C(f_j|f_i)}\log
\langle J(z^+(f_j)),J(z^-(f_j)), J(h(z^+(f_i)),J(h(z^-(f_i))\rangle
\end{align}
and for $i=j$ by defining $Q(f_i):=\langle J(z^+(f_i)), J(z^-(f_i)),
J(f_i(x_1)), J(x_1)\rangle$ we have
\begin{align}
\tau_{ii}=\log Q(f_i)+\sum_{h\in C_0(f_i|f_i)}\log \langle
J(z^+(f_i)),J(z^-(f_i)), J(h(z^+(f_i)),J(h(z^-(f_i))\rangle.
\end{align}
\end{prop}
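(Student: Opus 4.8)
The plan is to reduce everything to the Schottky case already treated, by showing that each $\omega_{f_i}$ is nothing but the pullback under the covering $J$ of the Schottky differential of the first kind $\omega_{\gamma_i}$ attached to the generator $\gamma_i$ of $\Gamma$, and then transporting the period computations of the earlier Proposition on first-kind Schottky differentials (the one giving $\int_{a_i}\omega_{g_j}=2\pi i\delta_{ij}$ and the formulas (\ref{F1.10}), (\ref{F1.11}) for $\tau_{ij}$) from $S_\Gamma$ to $S_0$ through the commutative diagram $\pi_\Gamma\circ J=\pi_F$.

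First I would make the identification $\omega_{f_i}=J^{*}\omega_{\gamma_i}$ precise. By the previous theorem (equation (\ref{T2.11})) and the definition $\omega_{f_i}=d\log W_{(f_i(x_1))-(x_1),x_0}$, one has
$$\omega_{f_i}=\sum_{h\in S(f_i)}d\log\langle J(z^+(h)),J(z^-(h)),J(x),J(x_0)\rangle.$$
Now I would invoke $J(z^{\pm}(h))=z^{\pm}(\gamma_h)$ together with the group isomorphism $F_0\simeq\Gamma$, $f_i\mapsto\gamma_i$, which carries the conjugacy class $S(f_i)$ bijectively onto $S(\gamma_i)$. Writing $z=J(x)$ and $z_0=J(x_0)$, the right-hand side becomes exactly the primitive-class form $\sum_{h\in S(\gamma_i)}d\log\langle z^+(h),z^-(h),z,z_0\rangle$ of $\omega_{\gamma_i}$, evaluated at $J(x)$; hence $\omega_{f_i}=J^{*}\omega_{\gamma_i}$ on $H^2$.

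Second, I would transport the periods. Since $\pi_\Gamma\circ J=\pi_F$, the map $J$ descends to the identity of $S=S_0=S_\Gamma$, so $\omega_{f_i}$ and $\omega_{\gamma_i}$ define the same holomorphic $1$-form downstairs, and the marked homology bases correspond: by the marking fixed before the statement, $\bar{a}_i$ is the image of $C_{p+i}$, which in $S_\Gamma$ agrees (via $\gamma_i(C_i)\subseteq C_{p+i}$) with the class $a_i$ of the Schottky Proposition, and likewise $\bar{b}_i\leftrightarrow b_i$. Part (a) then follows from $\int_{a_j}\omega_{\gamma_i}=2\pi i\delta_{ij}$, giving $\frac{1}{2\pi i}\int_{\bar{a}_j}\omega_{f_i}=\delta_{ij}$; since this $p\times p$ matrix of normalized $\bar{a}$-periods is the identity and the space of differentials of the first kind on $S_0$ has dimension $p$, the $\omega_{f_i}$ are linearly independent and hence form a normalized basis. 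For part (b) the same identification gives $\tau_{ij}=\int_{\bar{b}_j}\omega_{f_i}=\int_{b_j}\omega_{\gamma_i}$, and substituting $z^{\pm}(\gamma_i)=J(z^{\pm}(f_i))$ and $h(z^{\pm}(\gamma_j))=J(h(z^{\pm}(f_j)))$ into (\ref{F1.10}) and (\ref{F1.11}) yields the stated off-diagonal expression and, for the diagonal, the sum over $C_0(f_i|f_i)$. It remains only to match the leading term: $Q(f_i)=\langle z^+(\gamma_i),z^-(\gamma_i),\gamma_i(z_1),z_1\rangle$ with $z_1=J(x_1)$, and normalizing $z^{\pm}(\gamma_i)$ to $0,\infty$ (so that $\gamma_i(z)=q(\gamma_i)z$) gives $Q(f_i)=q(\gamma_i)$, so $\log Q(f_i)$ replaces the $\log q(g_i)$ term of (\ref{F1.11}).

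The hard part will be the bookkeeping in the second step: one must check that the descent of $J$ matches the two marked canonical homology bases not merely as unoriented curves but with the correct orientations, so that no signs or index shifts creep in between the $C_i$ versus $C_{p+i}$ conventions on the Schottky side and the $a_i,b_i$ sides of the Fuchsian polygon; pinning this down is exactly what secures the clean $\delta_{ij}$ in (a) and the precise index pattern in (b). Everything else is formal, provided the standing hypothesis $a(\Gamma)<1$ (which guarantees absolute convergence of all the products and series, and hence that the term-by-term application of $d\log$ and of the period integrals is legitimate) remains in force throughout.
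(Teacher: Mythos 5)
Your proposal is correct in substance but follows a genuinely different route from the paper's. You establish $\omega_{f_i}=J^{*}\omega_{\gamma_i}$ (via the $S(f_i)$-form of (\ref{T2.11}) and $J(z^{\pm}(h))=z^{\pm}(\gamma_h)$) and then transport the Chapter 1 Schottky period formulas ($\int_{a_i}\omega_{g_j}=2\pi i\delta_{ij}$, (\ref{F1.10}), (\ref{F1.11})) through the covering relation $\pi_\Gamma\circ J=\pi_F$. The paper begins with the same identification, writing $W_{(f_i(x_1))-(x_1),x_0}(x)$ as a product $\overline W_{(\gamma_i(z_1))-(z_1),z_0}(J(x))$ over $\Gamma$, but then computes the periods directly in the Chapter 2 conventions rather than citing Chapter 1: for (a) it pushes $\int_{\bar a_j}$ down to $\int_{C_{p+j}}$ and performs a residue/winding-number count (the term for $\alpha\in C(|\gamma_i)$ contributes $+1$ iff $\alpha(z^{+}(\gamma_i))\in D_{p+j}$ and $\alpha(z^{-}(\gamma_i))\notin D_{p+j}$, which happens only for $i=j$, $\alpha=\mathrm{id}$); for (b) it writes $\int_{\bar b_j}\omega_{f_i}=\log\bigl(W(f_j(x_j))/W(x_j)\bigr)=\log\mu_{(f_i(x_1))-(x_1)}(f_j)$ and invokes the multiplier formulas (\ref{F2.11}), (\ref{F2.12}) already proved in the chapter by the telescoping-product argument. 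Your route is structurally cleaner and makes the pullback relation explicit, but it buys this at the cost of (i) resting on the Chapter 1 proposition, whose proof this paper only cites to Manin, and (ii) the convention-matching you yourself flag: Chapter 1 takes $a_i$ to be the image of $C_i$ while Chapter 2 takes $\bar a_i$ to be the image of $C_{p+i}$ (these differ by orientation since they bound complementary regions), and the index order of $\tau_{ij}$ is swapped between the two chapters. The paper's explicit count over $C_{p+j}$, using $z^{+}(\gamma_i)\in D_{p+i}$ and $z^{-}(\gamma_i)\in D_i$, is exactly what fixes that sign, and its use of $\mu$ in part (b) bypasses the index bookkeeping entirely; your normalization $Q(f_i)=q(\gamma_i)$ for the diagonal term (sending $z^{\pm}(\gamma_i)$ to $0,\infty$ so the cross-ratio becomes $\gamma_i(z_1)/z_1$) is correct and matches the $\log q(g_i)$ term of (\ref{F1.11}).
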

\begin{proof} We have
\begin{align}
W_{(f(x_1))-(x_1),x_0}(x)=\prod_{\alpha\in \Gamma}\frac{w_{(\gamma(
z_1))-(z_1)}(\alpha(z))}{w_{(\gamma(z_1))-(z_1)}(\alpha(z_0))}
\end{align}
where $z=J(x), z_0=J(x_0)$,  $z_1=J(x_1)$ and $f\sim\gamma$. If we
show this equality for $f_i,~ i=1,2,3,...,p$ by
\begin{align}
W_{(f_i(x_1))-(x_1),x_0}(x)=\overline{W}_{(\gamma_i(z_1))-(z_1),z_o}(z)
\end{align}
then for a) we have
\begin{align}
\frac{1}{2\pi i}\int_{\bar{a}_j}\omega_{f_i}&=\frac{1}{2\pi i}\int_{\bar{a}_j}d\log W_{(f_i(x_1))-(x_1),x_0}(x)\nonumber\\
&=\frac{1}{2\pi i}\int_{a_j}\frac{dJ}{dx}~\frac{d\log W_{(f_i(x_1))-(x_1),x_0}(x)}{dz}dx\nonumber\\
&=\frac{1}{2\pi i}\int_{J(a_j)}\frac{d\log\overline{W}_{(\gamma_i(z_1))-(z_1),z_o}(z)}{dz}dz\nonumber\\
&=\frac{1}{2\pi i}\int_{C_{p+j}}d\log\overline{W}_{(\gamma_i(z_1))-(z_1),z_o}(z)\nonumber\\
&=\frac{1}{2\pi i}\sum_{\alpha\in C(|\gamma_i)}\int_{C_{p+j}}d\log
w_{(\alpha(z^+(\gamma_i)))
-(\alpha(z^-(\gamma_i)))}(z)\nonumber\\
&=\sum_{\alpha\in
C(|\gamma_i)}\begin{cases}1&if~\alpha(z^+(\gamma_i))\in
D_{p+j},\alpha(z^-(\gamma_i))\notin
D_{p+j}\\-1&if~\alpha(z^+(\gamma_i))\notin
D_{p+j},\alpha(z^-(\gamma_i))\in
D_{p+j}\\0&otherwise\end{cases}\nonumber
\end{align}
The  last equality comes from this fact that  if $i=j$ since
$z^+(\gamma_i))\in D_{p+i}$ and $z^-(\gamma_i)\in D_{i}$, only for
$\alpha=id$ the first alternative and for all $\alpha\neq id$ the
third one is valid. If $i\neq j$ only the third alternative is right
for all $\alpha$. We can see these from the Figure 1.\\

For the first part of b). We have shown by the point $x_j$ the
intersection of $a_j$ and $b_j$ in the representation of $P$, the
fundamental domain of $F$. Then $\bar b_j$ is the image of the part
of $b_i$ that is between the points $x_j$ and $f_j(x_j)$ in the
fundamental domain. Now, from ( \ref{F2.11}) we have
\begin{align}
\int_{\bar{b}_j}\omega_{f_i}&=\int_{x_j}^{f_j(x_j)}d\log W_{(f_i(x_1))-(x_1),x_0}(x)\nonumber\\
&=\log\frac{ W_{(f_i(x_1))-(x_1),x_0}(f_j(x_j))}{W_{(f_i(x_1))-(x_1),x_0}(x_j)}\nonumber\\
&=\log\left(\prod_{h\in
F_0}\frac{w_{(Jf_i(x_1))-(Jx_1)}(Jhf_j(x_j))}{w_{(Jf_i(x_1))-(Jx_1)}(Jh(x_0))}\Big/
\prod_{h\in F_0}\frac{w_{(Jf_i(x_1))-(Jx_1)}(Jh(x_j))}{w_{(Jf_i(x_1))-(Jx_1)}(Jh(x_0))}\right)\nonumber\\
&=\log\mu_{(f_i(x_1))-(x_1)}(f_j)\nonumber\\
&=\sum_{h\in C(f_j|f_i)}\log\frac{w_{(J(z^+(f_j)))-(J(z^-(f_j)))}(Jh(z^+(f_i)))}{w_{(J(z^+(f_j)))-(J(z^-(f_j)))}(Jh(z^-(f_i)))}\nonumber\\
&=\sum_{h\in C(f_j|f_i)}\log \langle
J(z^+(f_j)),J(z^-(f_j)),J(h(z^+(f_i)),J(h(z^-(f_i))\rangle
\end{align}
Where the last equality comes from the definition in chapter one.
Similarly we can reach to the second part of b) using (\ref{F2.12}).
\end{proof}
\subsection{Differentials of the third kind and Green's functions}
For $a,b,c,d\in S_0$ lets denote by the same words the corresponding
points in $P\subset H^2$ the fundamental domain of $F$ ( then
$Ja,Jb,Jc$ and $Jd$ are in $\hat{\mathbb{C}}\backslash
\cup_{i=1}^p(D_i\cup\bar{D}_{p+i})$ the fundamental domain of
$\Gamma$) and put $\nu_{(a)-(b)}=d\log W_{(a)-(b),x_0}(x)$. Then
$\nu_{(a)-(b)}$ is a differential of third kind on  $S_0$ with the
residues 1 and -1 at the images of $a$ and $b$. Also, because the
points $a$ and $b$ are in the fundamental domain then they are out
of the circles $C_i$. Then $\bar{a}_i$ - periods of $\nu_{(a)-(b)}$
are zero, and from (\ref{F2.13}) the  $\bar{b}_k$ - periods are
\begin{align}
\int_{\bar{b}_j}\nu_{(a)-(b)}&=\int_{x_j}^{f_j(x_j)}d\log W_{(a)-(b),x_0}(x)\nonumber\\
&=\log\frac{ W_{(a)-(b),x_0}(f_j(x_j))}{W_{(a)-(b),x_0}(x_j)}\nonumber\\
&=\log\left(\prod_{h\in
F_0}\frac{w_{(J(a))-(J(b))}(Jhf_j(x_j))}{w_{(J(a))-(J(b))}(Jh(x_0))}\Big/
\prod_{h\in F_0}\frac{w_{(J(a))-(J(b))}(Jh(x_j))}{w_{(J(a))-(J(b))}(Jh(x_0))}\right)\nonumber\\
&=\log\mu_{(a)-(b)}(f_j)\nonumber\\
&=\log\prod_{h\in S(f_j)}\frac{w_{(J(a))-(J(b))}(J(z^+(h)))}{w_{(J(a))-(J(b))}(J(z^-(h)))}\nonumber\\
&=\sum_{h\in S(f_j)}\log \langle
J(a),J(b),J(z^+(h)),J(z^-(h))\rangle.
\end{align}
Then we have
\begin{align}
Re\int_{\bar{b}_j}\nu_{(a)-(b)}=\log|\mu_{(a)-(b)}(f_j)|=\sum_{h\in
S(f_j)}\log |\langle J(a),J(b),J(z^+(h)),J(z^-(h))\rangle|.\nonumber
\end{align}
Now, we can reach to a differential of the third kind with pure
imaginary periods by defining
\begin{align}
\omega_{(a)-(b)}=\nu_{(a)-(b)}-\sum_{i=1}^pX_i(a,b)\omega_{f_i}
\end{align}
Where the real coefficients  $X_i(a, b)$ are such that the set of
the equations
\begin{align}
\sum_{j=1}^pX_j(a,b)Re \tau_{ij}=Re \int_{\bar
b_i}\nu_{(a)-(b)}=\sum_{h\in S(f_i)}\log |\langle Ja, Jb, J(z^+(h)),
J(z^-(h))\rangle|\nonumber
\end{align}
for $i= 1,...,p$ are satisfied. In fact the coefficients $X_i(a, b)$
kill the real part of the $\bar{b}_i$ - periods of $\nu_{(a)-(b)}$.
Notice that the new differential form $\omega_{(a)-(b)}$ probably
have nonzero  $\bar{a}_i$ - periods, but since the coefficients
$X_i(a, b)$ are real and $\int_{\bar b_i}\omega_{f_i}$ are pure imaginary  then they are pure imaginary too.\\

Finally, if we denote the points $a,b,c$, and $d$ in $S_{0}$ and the
images of these points in the fundamental domain of $F$ by the same
notations, then we have
\begin{align}
\int_d^c\nu_{(a)-(b)}&=\int_d^cd\log W_{(a)-(b),x_0}(x)\nonumber\\
&=\log\frac{ W_{(a)-(b),x_0}(c)}{W_{(a)-(b),x_0}(d)}\nonumber\\
&=\log\left(\prod_{h\in
F_0}\frac{w_{(J(a))-(J(b))}(Jh(c))}{w_{(J(a))-(J(b))}(Jh(x_0))}\Big/
\prod_{h\in F_0}\frac{w_{(J(a))-(J(b))}(Jh(d))}{w_{(J(a))-(J(b))}(Jh(x_0))}\right)\nonumber\\
&=\log\prod_{h\in F_0}\frac{w_{(J(a))-(J(b))}(Jh(c))}{w_{(J(a))-(J(b))}(Jh(d))}\nonumber\\
&=\sum_{h\in F_0}\log\langle J(a),J(b),J(h(c)),J(h(d))\rangle.
\end{align}
And by (\ref{T2.11})
\begin{align}
\int_d^c\omega_{f_i}&=\int_d^cd\log W_{(f_i(x_1))-(x_1),x_0}(x)\nonumber\\
&=\log\frac{W_{(f_i(x_1))-(x_1),x_0}(c)}{W_{(f_i(x_1))-(x_1),x_0}(d)}\nonumber\\
&=\log\frac{\prod_{h\in S(f_i)}\frac{w_{(J(z^+(h)))-(J(z^-(h)))}
(J(c))}{w_{(J(z^+(h)))-(J(z^-(h)))} (J(x_0))}}{\prod_{h\in
S(f_i)}\frac{w_{(J(z^+(h)))-(J(z^-(h)))}
(J(d))}{w_{(J(z^+(h)))-(J(z^-(h)))} (J(x_0))}}\nonumber\\
&=\log\prod_{h\in S(f_i)}\frac{w_{(J(z^+(h)))-(J(z^-(h)))}
(J(c))}{w_{(J(z^+(h)))-(J(z^-(h)))}(J(d))}\nonumber\\
&=\sum_{h\in S(f_i)}\log\langle
J(z^+(h)),J(z^-(h)),J(c),J(d)\rangle.
\end{align}
Then the Green's function on $S_0$ can be computed as following
\begin{align}
g((a)-(b),(c)-(d))&=Re\int_d^c\omega_{(a)-(b)}\nonumber\\
&=Re\int_d^c\nu_{(a)-(b)}-\sum_{i=1}^pX_i(a,b)Re\int_d^c\omega_{f_i}\nonumber\\
&=\sum_{h\in F_0}\log\mid\langle
J(a),J(b),J(h(c)),J(h(d))\rangle\mid-\nonumber\\
&\sum_{i=1}^pX_i(a,b)\sum_{h\in S(f_i)}\log\mid\langle
J(z^+(h)),J(z^-(h)),J(c),J(d)\rangle\mid
\end{align}
Because of the commutativity of the diagram
$$H^2\hspace{5mm}\overset{J}{\longrightarrow}\hspace{5mm}\Omega(\Gamma)$$
$$\pi_F\searrow\hspace{7mm}\swarrow \pi_\Gamma  $$
$$S_0$$
the image of the point $x\in H^2$ and $J(x)\in\Omega(\Gamma)$ are
the same in $S_0$. Then the above formula for the Green's function
on $S_0$ gives an expression via the  points on $S_0$. In fact for
the points $a,b$ in $\Omega(F)$ the images of the geodesics
$\{a,b\}$ and $\{J(a),J(b)\}$ in the Kleinian manifold $M_F$ are the
same. We can see this by using the following covering spaces
\begin{align}
H^3\cup\Omega(F)=H^3\cup
H^2\cup(-H^2)\overset{J}{\longrightarrow}\frac{H^3}{N}\cup
\frac{H^2}{N}\cup\frac
{-H^2}{N}\overset{\pi_\Gamma}{\longrightarrow}M_F.
\end{align}
For all parts $Aut(\tilde{X})\simeq\Gamma$, and again we have
$\pi_\Gamma\circ J=\pi_F$. Also since  $J$ is extended on
$\Lambda(F_0)$ in a natural way then for each $f\in F_0$ the image
of the geodesics $\{Z^+(f),Z^-(f)\}$ and $\{J(Z^+(f)),J(Z^-(f))\}$
in $N_F$ are the same.\\

One can reach to a formula for the Green's function on $S_1$
similarly by replacing $-H^2$ and $-P$ instead of $H^2$ and $P$.
\begin{defn} A finitely generated, torsion free Kleinian group $Q$
is called Quasi - Fuchsian if the limit set $\Lambda(Q)$ be a Jordan
curve and each of the two simply connected components of $\Omega(Q)$
be $Q$-invariant.
\end{defn}
\begin{prop} Given a Quasi - Fuchsian group $Q$, there exist a
Fuchsian group $F$ and a quasiconformal diffeomorphism between
Kleinian manifolds $M_Q$ and $M_F$.
\end{prop}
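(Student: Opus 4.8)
The plan is to exhibit $Q$ as a quasiconformal deformation of a Fuchsian group, and then to transport the resulting conjugacy from the sphere at infinity into $H^3$ and descend it to the quotients. First I would uniformize one boundary component. By hypothesis $\Omega(Q)$ has exactly two $Q$-invariant simply connected components $\Omega^+$ and $\Omega^-$ separated by the Jordan curve $\Lambda(Q)$. The Riemann mapping theorem provides a conformal isomorphism $g^+:\Omega^+\to H^2$, and since $Q$ acts on $\Omega^+$ by restrictions of elements of $PGL(2,\mathbb{C})$, the map $g^+$ conjugates $Q|_{\Omega^+}$ to a group $F:=g^+Q(g^+)^{-1}$ of conformal automorphisms of $H^2$, that is, a Fuchsian group, with $H^2/F\cong\Omega^+/Q$ a surface of the same genus $p$. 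This fixes the candidate group $F$ and an isomorphism $\theta:Q\to F$.

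Next I would build the conjugacy on the opposite side and glue. Since $\Omega^-/Q$ and $(-H^2)/F$ are closed surfaces of the same genus $p$, they are quasiconformally equivalent; choosing a quasiconformal homeomorphism between them that induces $\theta$ on fundamental groups and lifting it to the covers yields an equivariant quasiconformal map $g^-:\Omega^-\to -H^2$ conjugating $Q|_{\Omega^-}$ to the same $F$ acting on the lower half plane. By Carath\'eodory's theorem $g^+$ and $g^-$ extend to boundary homeomorphisms $\Lambda(Q)\to\hat{\mathbb{R}}$; as both realize the isomorphism $\theta$, they send each fixed point $z^\pm(q)$ to $z^\pm(\theta(q))$ and hence agree on the fixed points of $Q$, a dense subset of $\Lambda(Q)$, so the two boundary extensions coincide. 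Therefore $g^+$ and $g^-$ piece together to a homeomorphism $\Phi$ of $\hat{\mathbb{C}}$ with $\Phi Q\Phi^{-1}=F$ and $\Phi(\Lambda(Q))=\hat{\mathbb{R}}$.

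The genuinely delicate point, and the one I expect to be the main obstacle, is to promote this glued $\Phi$ to a globally quasiconformal self-map of $\hat{\mathbb{C}}$, i.e.\ to control it across the limit set. Since $Q$ is finitely generated it is geometrically finite, so by Ahlfors' finiteness theorem $\Omega(Q)/Q$ has finite type and $\Lambda(Q)$ has two-dimensional Lebesgue measure zero; the subtlety is that $\Lambda(Q)$ may have Hausdorff dimension strictly greater than one, so it is not removable merely for reasons of finite length. The resolution is the heart of Ahlfors--Bers theory: one verifies that the induced boundary correspondence is quasisymmetric, equivalently that $\Lambda(Q)$ is a quasicircle, which makes $\Lambda(Q)$ removable for quasiconformal maps and upgrades $\Phi$ to a $K$-quasiconformal homeomorphism of $\hat{\mathbb{C}}$. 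Equivalently, and more self-contained, I would assemble a $Q$-invariant Beltrami coefficient on $\Omega^+\cup\Omega^-$ (extended by zero across the measure-zero set $\Lambda(Q)$) and invoke the measurable Riemann mapping theorem to produce $\Phi$ directly, the $Q$-invariance of the coefficient forcing $\Phi Q\Phi^{-1}$ to be a Fuchsian M\"obius group.

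Finally I would extend $\Phi$ from the boundary into the solid and descend. Using the conformally natural Douady--Earle barycentric extension, $\Phi$ lifts to a quasiconformal diffeomorphism $\hat\Phi$ of $H^3\cup\hat{\mathbb{C}}$ that is equivariant, $\hat\Phi\circ q=\theta(q)\circ\hat\Phi$ for all $q\in Q$. Equivariance gives $\hat\Phi(H^3\cup\Omega(Q))=H^3\cup\Omega(F)$, so $\hat\Phi$ passes to the quotients and produces the desired map $M_Q=(H^3\cup\Omega(Q))/Q\to(H^3\cup\Omega(F))/F=M_F$; smoothness of the barycentric extension makes it a diffeomorphism, which completes the argument.
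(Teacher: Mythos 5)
The paper offers no argument for this proposition beyond a citation to Ahlfors, so there is nothing to compare line by line; judged on its own terms, your outline follows the standard Maskit--Bers strategy, but it has a genuine gap exactly at the step you yourself single out as the main obstacle. First, ``since $Q$ is finitely generated it is geometrically finite'' is false for Kleinian groups in general (totally degenerate groups are finitely generated and not geometrically finite), and for a group satisfying only the paper's definition of quasi-Fuchsian (limit set a Jordan curve, two invariant components) geometric finiteness is essentially equivalent to the proposition being proved. Second, the Ahlfors finiteness theorem does not assert that $\Lambda(Q)$ has two-dimensional Lebesgue measure zero; that is the Ahlfors measure problem, classically known only for geometrically finite groups, so extending the Beltrami coefficient ``by zero across $\Lambda(Q)$'' is not yet licensed. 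Third, and most seriously, your proposed resolution --- ``one verifies that the induced boundary correspondence is quasisymmetric, equivalently that $\Lambda(Q)$ is a quasicircle'' --- is circular: $\Lambda(Q)$ being a quasicircle is equivalent to $Q$ being a quasiconformal deformation of a Fuchsian group, which is precisely the statement to be proved, and no mechanism for the verification is indicated. The measurable-Riemann-mapping variant has the same defect in a different place: $Q$-invariance of the Beltrami coefficient only forces $\Phi Q\Phi^{-1}$ to consist of M\"obius transformations; to conclude that this group is Fuchsian you must know that $\Phi(\Lambda(Q))$ is a round circle, which does not follow from invariance alone.

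A repair is to run the deformation from the Fuchsian side, as in Bers's simultaneous uniformization: let $F=g^+Q(g^+)^{-1}$ be the cocompact Fuchsian group uniformizing $\Omega^+/Q$, lift a marking-compatible quasiconformal homeomorphism $(-H^2)/F\to\Omega^-/Q$ to an $F$-equivariant map of $-H^2$, take its Beltrami coefficient on $-H^2$ extended by zero on $H^2\cup\hat{\mathbb{R}}$ (no removability issue arises, since $\hat{\mathbb{R}}$ is a round circle), solve the Beltrami equation, and obtain $Q'=w^{\nu}F(w^{\nu})^{-1}$, which is quasiconformally Fuchsian and has the same marked conformal boundary data as $Q$. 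One then still needs an isomorphism/rigidity input (Maskit's theorem, or Marden's isomorphism theorem --- in effect the four-condition equivalence the paper quotes from Matsuzaki--Taniguchi immediately below this proposition) to conjugate $Q'$ to $Q$; that input cannot be avoided. The final step of your outline, the equivariant barycentric extension to $H^3$ and descent to a diffeomorphism $M_Q\to M_F$, is fine once a globally quasiconformal equivariant $\Phi$ is actually in hand.
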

\begin{proof} ( See \cite{A}).\end{proof}
Lets denote by $D_1$ and $D_2$  the  simply connected components of
$\Omega(Q)$ for Quasi-Fuchsian group $Q$. Then there are two
Fuchsian groups $F_1$ and $F_2$ related to $Q$ such that $D_i/Q$ is
homeomorphic to  $H^2/F_i$. Then each $F_i$ is isomorphic to $Q$.
Also since $Q|_{\Lambda(Q)}$ is topologically conjugate to
$F_i|_{\hat{\mathbb{R}}}$ ( or $F_i|_{S^1}$ in unit disk model )
then naturally there is a Markov map
$f_Q:\Lambda(Q)\rightarrow\Lambda(Q)$ like Fuchsian case. Then all
the statements for Fuchsian groups can be extended to the Quasi -
Fuchsian groups. In this case because the Jordan curve $\Lambda(Q)$
is generally not smooth nor even rectifiable ( \cite{LB} p.263 )
then the hausdorff dimension of the subgroup $Q_0$ of $Q$ ( like
$F_0$ for Fuchsian case $F$) may be not less than 1 in general. We
have the following theorem
\begin{thm} For a finitely generated Kleinian group $G$ the following
conditions  are equivalent\\

\noindent(1) $M_G = (H^3\cup\Omega(G))/G$ is diffeomorphic to
$S\times[0,1]$ where $S$ is a component of $\partial M_G$;\\
(2) $G$ is Quasi - Fuchsian;\\
(3) $\Omega(G)$ has an invariant component that is a Jordan domain;\\
(4) $\Omega(G)$ has two invariant components;
\end{thm}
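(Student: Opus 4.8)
The plan is to prove that the four conditions are equivalent by running the single cycle $(2)\Rightarrow(1)\Rightarrow(4)\Rightarrow(3)\Rightarrow(2)$, so that the only genuinely hard inputs (the quasiconformal deformation and the finiteness/planarity theory) enter at isolated, controlled steps. For $(2)\Rightarrow(1)$ I would argue as follows. Assume $G$ is quasi-Fuchsian. By the Proposition immediately preceding the theorem there is a Fuchsian group $F$ and a quasiconformal diffeomorphism $M_G\to M_F$. For a Fuchsian group $F$ with $S=H^2/F$ compact we already established $M_F\cong S\times[0,1]$, so composing gives $M_G\cong S\times[0,1]$ with $S$ realized as one of the two boundary components. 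This is exactly $(1)$.

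Next, for $(1)\Rightarrow(4)$: suppose $M_G\cong S\times[0,1]$. Its conformal boundary $\partial M_G=\Omega(G)/G$ then has exactly two connected components, each diffeomorphic to $S$, and each inclusion $S\times\{j\}\hookrightarrow S\times[0,1]$ is a deformation retract and hence induces an isomorphism of $\pi_1(S)$ onto $\pi_1(M_G)=G$. Since $G$ is torsion free, the contractible space $H^3\cup\Omega(G)$ is the universal cover of $M_G$ with deck group $G$; by the standard covering-space dictionary, the stabilizer in $G$ of a component $\Omega_j$ of $\Omega(G)$ lying over a given boundary surface is the image in $G$ of that surface's fundamental group. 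As this image is all of $G$, each of the two components is $G$-invariant, which gives $(4)$.

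The step $(4)\Rightarrow(3)$ is the crux, and I expect it to be the main obstacle. Given two invariant components $\Omega_1,\Omega_2$, minimality of the limit set already forces $\partial\Omega_1=\partial\Omega_2=\Lambda(G)$, because each $\partial\Omega_i$ is a nonempty closed $G$-invariant subset of $\Lambda(G)$. What remains is to promote an invariant component to a Jordan domain: one must show each $\Omega_i$ is simply connected and that the common boundary $\Lambda(G)$ is then a Jordan curve. Here I would invoke the Ahlfors finiteness theorem, so that each $\Omega_i/G$ is a closed Riemann surface of finite type, together with Maskit's planarity theorem for function groups, which yields simple connectivity of an invariant component whose quotient is closed; two simply connected invariant domains sharing the boundary $\Lambda(G)$ then force $\Lambda(G)$ to be a Jordan curve, so $\Omega_1$ is a Jordan domain and $(3)$ holds. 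This analytic-topological input, rather than any formal manipulation, is where the real content sits.

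Finally, $(3)\Rightarrow(2)$ is elementary. Let $\Delta$ be an invariant component that is a Jordan domain, and let $C=\partial\Delta$ be its bounding Jordan curve. Since boundaries of components of $\Omega(G)$ lie in $\Lambda(G)$, we have $C\subseteq\Lambda(G)$; and $C$ is nonempty, closed and $G$-invariant because $\Delta$ is, so minimality of the limit set gives $\Lambda(G)\subseteq C$ and hence $\Lambda(G)=C$ is a Jordan curve. Then $\Omega(G)=\hat{\mathbb{C}}\setminus C$ has exactly two simply connected components, and each $g\in G$, fixing $\Delta$, must also fix the complementary component, so both are $G$-invariant. As $G$ is finitely generated and torsion free by hypothesis, the definition of a quasi-Fuchsian group is satisfied, which is $(2)$ and closes the cycle.
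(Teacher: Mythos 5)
The paper offers no argument of its own here --- its ``proof'' is the citation to Matsuzaki--Taniguchi, p.~125 --- so your proposal has to stand on its own, and most of your cycle does: $(2)\Rightarrow(1)$ via the quasiconformal conjugacy to a Fuchsian group and the product structure $M_F=S\times[0,1]$ already established in the paper, $(1)\Rightarrow(4)$ via incompressibility of the boundary and the identification of the stabilizer of a component of $\Omega(G)$ with the image of the fundamental group of the corresponding boundary surface, and $(3)\Rightarrow(2)$ by minimality of the limit set, are all sound. (In $(4)\Rightarrow(3)$ you do not even need Ahlfors finiteness or planarity to get simple connectivity: if $\Omega_1,\Omega_2$ are disjoint invariant components, then $\partial\Omega_1=\partial\Omega_2=\Lambda(G)$, so $\Omega_2\cup\Lambda(G)$ lies in a single complementary component of $\Omega_1$, and any further complementary component would be a nonempty closed set with boundary in $\Lambda(G)$ yet disjoint from it; hence $\hat{\mathbb C}\setminus\Omega_1$ is connected and $\Omega_1$ is simply connected. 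This is Accola's elementary argument.)

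The genuine gap is the last inference of $(4)\Rightarrow(3)$: ``two simply connected invariant domains sharing the boundary $\Lambda(G)$ then force $\Lambda(G)$ to be a Jordan curve.'' This is false as a point-set statement. A continuum in $S^2$ can have exactly two complementary domains, both simply connected, and be the full boundary of each without being a simple closed curve --- the Warsaw circle (a topologist's sine curve closed up into a loop) is the standard example; Jordan-ness requires in addition that the continuum be locally connected, or that every boundary point be accessible from both sides, and nothing in your argument supplies that. For Kleinian groups the conclusion \emph{is} true, but the only known proofs go through the analytic machinery you were trying to avoid at this step: one shows directly that a finitely generated group with two (simply connected) invariant components is a quasiconformal deformation of a Fuchsian group (Maskit's ``On boundaries of Teichm\"uller spaces, II,'' or Marden; essentially Bers' simultaneous uniformization run backwards via the measurable Riemann mapping theorem), and then $\Lambda(G)$ is a quasicircle, hence a Jordan curve. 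In other words, the hard content of the theorem is $(4)\Rightarrow(2)$, and your cycle does not bypass it by inserting $(3)$ in between --- with the paper's definition of quasi-Fuchsian (limit set a Jordan curve), $(4)\Rightarrow(3)$ already \emph{is} $(4)\Rightarrow(2)$. You correctly identified this step as the crux, but the tool you reach for (planarity plus a topological common-boundary argument) does not close it; you need to cite or reproduce the quasiconformal deformation theorem at exactly this point.
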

\begin{proof} ( See \cite{HK}, page 125).\end{proof}
Also two homeomorphic Riemann surfaces can be made uniform
simultaneously by a single Quasi - Fuchsian group ( theorem of Bers
on simultaneous uniformization). Then we have the computations for
all hyperbolic 3 - manifolds with two compact Riemann surfaces with
the same genus as it's boundary components.

\section{$n$ Boundary components case}
Already, we have computed the Green's function for the boundary
components of a hyperbolic 3-manifolds with two boundary at infinity
with the same genera. In this chapter we want to do the problem for
the most general case that the Green's function can be defined i.e.
for manifolds with $n<\infty$ boundary components that are compact
Riemann surfaces probably with different genera. The case with two
boundary is different genera are included in this case too. Such
manifolds are uniformized by some Kleinian groups and for the first,
we will try to find a decomposition for such Kleinian groups. This,
help us to find invariants of these groups that are necessary for
computing  the automorphic functions that will be used for computing
the differentials and the Green's functions. In all of this section
we consider $G$ to be a finitely generated and torsion free Kleinian
group of the second kind ( i.e. $\Omega(G)\neq\varnothing$).

\begin{thm}\textbf{(The Ahlfors finiteness theorem)} Let $G$ be a
finitely generated and torsion free Kleinian group. Then $\partial
M_G = \Omega(G)/G$ is a finite union of analytically finite Riemann
surfaces( closed Riemann surface from which a finite number of
points are removed ).\end{thm}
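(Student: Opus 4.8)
The plan is to reduce the statement to a single finite-dimensionality assertion about holomorphic quadratic differentials and then to bound that dimension using the finite generation of $G$. Let $S = \Omega(G)/G$; since $G$ is torsion free it acts freely on $\Omega(G)$, so $S$ is a genuine Riemann surface without orbifold points, though a priori disconnected and of possibly infinite topological type. I would work with the space $A_2(G)$ of integrable holomorphic quadratic differentials on $S$: locally $\phi(z)\,dz^2$ with $\phi$ holomorphic and $\int_S |\phi|\, dx\, dy < \infty$, equivalently the $G$-automorphic holomorphic functions $\phi$ on $\Omega(G)$ with $\phi(g z)\, g'(z)^2 = \phi(z)$ for all $g \in G$ and finite integral over a fundamental domain. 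On a connected analytically finite component of genus $g$ with $n$ ends this dimension equals $3g-3+n$ when that number is positive, and it becomes infinite for a component of infinite genus, with infinitely many ends, or with an ideal boundary of positive length. Thus finiteness of $\dim_{\mathbb{C}} A_2(G)$ forces every component of $S$ to be analytically finite and bounds the number of components of positive complexity; the finitely many remaining possibilities of non-positive complexity (and the bound on their number) are then controlled by the parallel estimate on the total hyperbolic area $2\pi\sum_i(2g_i-2+n_i)$, which is finite by the same mechanism.

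To bound the dimension I would use the Eichler--Bers cohomology map. Let $\Pi_2$ be the three-dimensional space of polynomials of degree at most $2$, regarded as a $G$-module through the M\"obius action of $G \subset \mathrm{PGL}(2,\mathbb{C})$ on $\hat{\mathbb{C}}$. To $\phi \in A_2(G)$ one attaches its Eichler integral, obtained by integrating $\phi$ against the kernel $(z-\zeta)^2$ along paths in $\Omega(G)$; the failure of this integral to be exactly $G$-equivariant is measured by period polynomials $p_g \in \Pi_2$ satisfying the cocycle identity $p_{gh} = p_g + g\cdot p_h$. This produces a well-defined linear map
$$\beta : A_2(G) \longrightarrow H^1(G, \Pi_2).$$
The essential analytic content, and the heart of the theorem, is that $\beta$ is injective: if $\beta(\phi)$ is a coboundary then the Eichler integral can be corrected to a genuinely automorphic potential, and a Stokes / integration-by-parts argument over a fundamental domain, using the $L^1$ integrability of $\phi$, forces $\phi \equiv 0$.

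Granting injectivity, the argument concludes algebraically. If $G$ is generated by $N$ elements then any group $1$-cocycle is determined by its values on the generators, so $Z^1(G,V)$ embeds in $V^N$ and hence $\dim_{\mathbb{C}} H^1(G, V) \le N \dim_{\mathbb{C}} V$ for every finite-dimensional $G$-module $V$. Taking $V = \Pi_2$, for which $\dim_{\mathbb{C}} V = 3$, injectivity of $\beta$ yields
$$\dim_{\mathbb{C}} A_2(G) \;\le\; \dim_{\mathbb{C}} H^1(G, \Pi_2) \;\le\; 3N \;<\; \infty.$$
Combined with the dictionary of the first paragraph, this shows that $S = \Omega(G)/G$ has only finitely many components, each of finite genus and with finitely many ends, and that each end is a puncture rather than a boundary of positive ideal length; that is, $\partial M_G = \Omega(G)/G$ is a finite union of analytically finite Riemann surfaces.

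The main obstacle is the injectivity of $\beta$ in the middle step, namely that a square-integrable holomorphic quadratic differential with trivial Eichler class must vanish. This rests on controlling the boundary behaviour of the Eichler potential along $\Lambda(G)$ and is exactly the delicate point of the classical proof; the torsion-freeness hypothesis is precisely what removes the complications arising from elliptic fixed points. By contrast, the dimension count $3g-3+n$ for integrable quadratic differentials, the area refinement needed to bound the number of rigid components, and the cohomological inequality $\dim H^1(G,V) \le N\dim V$ are all standard and present no real difficulty.
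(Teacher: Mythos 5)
The paper does not prove this statement at all: it quotes the Ahlfors finiteness theorem and the entire ``proof'' is the citation to Ahlfors' 1964 paper (and its 1965 erratum) in the bibliography. So there is nothing in the paper to compare against line by line; what you have written is an outline of the classical Ahlfors--Bers proof via Eichler cohomology, and as an outline it is the right strategy: map the integrable holomorphic quadratic differentials $A_2(G)$ injectively into $H^1(G,\Pi_2)$, bound the latter by $3N$ using finite generation, and translate finite-dimensionality of $A_2(G)$ back into finiteness of the topological type of $\Omega(G)/G$.

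Two points keep this from being a proof rather than a roadmap. First, you explicitly defer the injectivity of $\beta$, and that single step is essentially the whole theorem: it requires constructing the Eichler potential with controlled growth near $\Lambda(G)$ (via the Bers reproducing formula or a smooth truncation) and a genuinely delicate approximation argument to run the Stokes computation over a fundamental domain that is not relatively compact. Everything you label ``standard'' really is standard; everything you label ``the heart'' is missing. Second, the phrase ``controlled by the parallel estimate on the total hyperbolic area \dots which is finite by the same mechanism'' hides the one place where the $q=2$ argument genuinely fails: on a thrice-punctured sphere component $3g-3+n=0$, so $A_2$ sees nothing, and finiteness of $\dim A_2(G)$ cannot bound the number of such components. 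Closing this requires either passing to weights $q\ge 3$ (where $\dim A_q=q-2>0$ on such a component) or Bers' area inequality obtained by letting $q\to\infty$ in $\dim A_q(G)\le(2q-1)(N-1)$, or Greenberg's separate argument. This is precisely the gap in Ahlfors' original 1964 paper that the erratum cited in this paper's bibliography exists to repair, so it deserves to be stated explicitly rather than folded into ``the same mechanism.''
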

\begin{proof}( See \cite{A}).\end{proof}

For the rest of the section we consider
\begin{align}
\partial M_G = \frac{\Omega(G)}{G}=S_1\cup S_2\cup ...\cup S_{n+1}
\end{align}
such that for each $i$, $S_i$ is a compact Riemann surface of genus
$p_i$ and without cusped point. Then  we should consider that $G$
doesn't have any parabolic elements. Also consider that the
component $S_{n+1}$ is the one that for $i=1,...,n$, $p_{n+1}\geq
p_i$. In this case $G$ is a function group ( A finitely generated
non - elementary Kleinian group which has an invariant component in
its region of discontinuity ) and there is an invariant component
$\Omega_0$ in $\Omega(G)$.

\begin{prop}\label{P3.11} Let $G$ be a function group  with an invariant
component $\Omega_0$ in $\Omega(G)$. Then
$\Lambda(G)=\partial\Omega_0$. And hence all the other components of
$\Omega(G)$ are simply connected.
\end{prop}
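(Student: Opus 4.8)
The plan is to prove the two assertions separately, using the characterization of $\Lambda(G)$ recalled above as the minimal non-empty closed $G$-invariant subset of $\mathbb{P}^1(\mathbb{C})$, together with the fact that a function group is by definition non-elementary, so that $\Lambda(G)\neq\emptyset$.

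First I would establish $\Lambda(G)=\partial\Omega_0$. The inclusion $\partial\Omega_0\subseteq\Lambda(G)$ is immediate from the fact that $\Omega(G)$ is open: if a boundary point of the component $\Omega_0$ lay in $\Omega(G)$, it would lie in some open component of $\Omega(G)$ meeting $\Omega_0$, hence in $\Omega_0$ itself, which is absurd. Thus $\partial\Omega_0\subseteq\mathbb{P}^1(\mathbb{C})\setminus\Omega(G)=\Lambda(G)$. For the reverse inclusion I would check that $\partial\Omega_0$ is a non-empty closed $G$-invariant set: it is closed as a topological boundary; it is non-empty because $\Omega_0$ is a non-empty open subset of the connected sphere which is proper (since $\Lambda(G)\neq\emptyset$); and it is $G$-invariant because $\Omega_0$ is $G$-invariant and every $g\in G$ acts as a homeomorphism, so $g(\partial\Omega_0)=\partial(g\Omega_0)=\partial\Omega_0$. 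By the minimality of $\Lambda(G)$ this forces $\Lambda(G)\subseteq\partial\Omega_0$, and equality follows.

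For the second assertion I would use the standard fact that a connected open subset $U$ of $S^2=\mathbb{P}^1(\mathbb{C})$ is simply connected if and only if its complement $S^2\setminus U$ is connected. Let $\Omega'$ be any component of $\Omega(G)$ distinct from $\Omega_0$; its complement is $\mathbb{P}^1(\mathbb{C})\setminus\Omega'=\Lambda(G)\cup\bigcup_{\Omega_j\neq\Omega'}\Omega_j$, the union of the limit set with all remaining components. The key point is that the boundary of every component $\Omega_j$ of $\Omega(G)$ lies in $\Lambda(G)$, by the argument of the previous paragraph, while by the first part $\Lambda(G)=\partial\Omega_0\subseteq\overline{\Omega_0}$; since $\overline{\Omega_0}$ is the closure of the connected set $\Omega_0$, it is connected. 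Hence each $\overline{\Omega_j}$ is connected with non-empty boundary $\partial\Omega_j\subseteq\Lambda(G)$, so it meets the connected core $\overline{\Omega_0}$. The whole complement, being a union of connected sets each meeting $\overline{\Omega_0}$, is therefore connected, and so $\Omega'$ is simply connected.

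The first assertion is a routine application of the minimality of the limit set, so the main work lies in the second. The delicate points there are the topological equivalence between simple connectivity and connectivity of the complement on $S^2$, and making the gluing argument airtight: one must verify that every component has non-empty boundary contained in $\Lambda(G)$, and that $\overline{\Omega_0}$ is genuinely connected so that it can serve as the common core through which the closures of all other components are linked.
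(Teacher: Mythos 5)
Your proof is correct. Note that the paper offers no argument of its own for this proposition --- it simply cites Matsuzaki--Taniguchi --- and what you have written is essentially the standard proof from that reference: minimality of the non-empty closed invariant set $\Lambda(G)$ (legitimate here because a function group is by definition non-elementary) gives $\Lambda(G)=\partial\Omega_0$, and then any other component $\Omega'$ has connected complement because that complement is $\overline{\Omega_0}\cup\bigcup_{\Omega_j\neq\Omega'}\overline{\Omega_j}$ with every $\partial\Omega_j$ non-empty and contained in $\Lambda(G)=\partial\Omega_0\subseteq\overline{\Omega_0}$, so the criterion that an open connected subset of $S^2$ is simply connected iff its complement is connected applies. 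The only steps you flag as delicate are indeed the ones to be careful about, and you handle both correctly; your write-up in fact supplies a proof the paper leaves to the literature.
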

\begin{proof}( See \cite{HK}).\end{proof}
We consider two cases for $\Omega_0$ in the proposition above:\\
\textbf{Case1:} The component $\Omega_0$ is simply connected. Then
in this case $G$ is a B - group ( a function group with simply
connected invariant component ). And we know that a B-group with a
compact boundary component say $S_k$ is a
Quasi - Fuchsian group ( See \cite{GFK} page 411).\\
\textbf{Case2:} The component $\Omega_0$ is not simply connected. In
this case we have the following theorem.

\begin{thm} Let $G$ be a function group with invariant component
$\Omega_0$ that is not a simply connected component. Then $G$ has a
decomposition into a free product of subgroups as following
$$G= \mathfrak{B}_1\ast
\cdots\ast\mathfrak{B}_r\ast\mathfrak{A}_1\ast\cdots\ast\mathfrak{A}_s\ast
<p_1>\ast\cdots\ast <p_t>\ast <f_1>\ast\cdots\ast <f_u>$$ Where each
$\mathfrak{B}_i$ is a B - group , $\mathfrak{A}_i$ is a free abelian
group of rank two with two parabolic generators, $<p_i>$ is the
cyclic group generated by the parabolic transformation $p_i$ and
$<f_i>$ is the cyclic group generated by the loxodromic
transformation $f_i$. Furthermore, each parabolic transformation in
$G$ is conjugate in $G$ to one a listed subgroup. $G$ has finite -
sided fundamental polyhedron if and only if all the groups
$\mathfrak{B}_i$ do.
\end{thm}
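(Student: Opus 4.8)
The plan is to obtain the decomposition geometrically, by cutting the quotient surface $\Sigma=\Omega_0/G$ along a maximal system of disjoint simple closed curves and reading off the free factors from the resulting pieces via the Klein--Maskit combination theorems. First I would use that $\Omega_0$ is a planar invariant component, so that $G$ is a planar function group, together with Proposition \ref{P3.11}, which guarantees $\Lambda(G)=\partial\Omega_0$ and that every other component is simply connected. Since $\Omega_0$ is not simply connected, the surface $\Sigma$ carries nontrivial topology, and I would select a \emph{complete system of structure loops}: a finite collection $w_1,\dots,w_k$ of disjoint simple closed curves on $\Sigma$ whose full preimages in $\Omega_0$ consist of disjoint Jordan curves, each precisely invariant under its stabilizer in $G$. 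Finiteness of such a system is where the Ahlfors finiteness theorem stated at the start of this section enters, bounding the genus and the number of branch points of $\Sigma$ and hence the combinatorial complexity of any admissible cutting system.

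Next I would run the decomposition inductively. Cutting $\Omega_0$ along one Jordan curve $\tilde w_1$ that is precisely invariant under the trivial subgroup (equivalently, $\tilde w_1$ and all its $G$--translates are pairwise disjoint and $\tilde w_1$ separates $\hat{\mathbb{C}}$), and applying the first Klein--Maskit combination theorem in reverse, exhibits $G$ as a free product of the stabilizers of the two sides; when $\tilde w_1$ does not separate, the corresponding handle contributes a cyclic loxodromic free factor $\langle f\rangle$. Each resulting factor is again a function group on its own invariant component, of strictly smaller complexity, so the induction terminates and produces the free product
\begin{align}
G=\mathfrak{B}_1\ast\cdots\ast\mathfrak{B}_r\ast\mathfrak{A}_1\ast\cdots\ast\mathfrak{A}_s\ast\langle p_1\rangle\ast\cdots\ast\langle p_t\rangle\ast\langle f_1\rangle\ast\cdots\ast\langle f_u\rangle.\nonumber
\end{align}
It then remains to identify each indecomposable factor. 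A factor whose invariant component is simply connected is by definition a B--group $\mathfrak{B}_i$; an elementary factor is classified by its limit set, giving a cyclic loxodromic group $\langle f_i\rangle$ (two fixed points), a cyclic parabolic group $\langle p_i\rangle$ (one fixed point realizing a cusp), or a rank--two free abelian group $\mathfrak{A}_i$ generated by two parabolics sharing a fixed point.

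To finish, I would verify the two supplementary claims. For the statement on parabolics, I would track where parabolic elements can arise under the combination theorems: a parabolic of $G$ either is conjugate into one of the factors or arises as the stabilizer of a structure loop, and in the latter case the precise invariance of the loops forces it to lie in some $\mathfrak{A}_i$ or $\langle p_i\rangle$, which yields that every parabolic is $G$--conjugate into one of the listed subgroups. For the finite--sidedness equivalence, I would assemble a fundamental polyhedron for $G$ from those of the factors through the combination process: the elementary factors always admit finite--sided polyhedra, so $G$ has a finite--sided fundamental polyhedron if and only if each $\mathfrak{B}_i$ does. The main obstacle is the first step, namely proving the existence and finiteness of a complete system of structure loops with the exact precise invariance needed to invoke the combination theorems cleanly; this requires the planarity of $\Omega_0$, a careful treatment of accidental parabolic transformations, and the Ahlfors finiteness bound to guarantee termination of the induction. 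The detailed construction is carried out in \cite{HK} and \cite{GFK}, which I would follow.
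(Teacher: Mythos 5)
The paper offers no argument for this theorem beyond the citation to \cite{GFK} (p.~420) and \cite{HK}, so there is nothing internal to compare your proof against. Your sketch is a correct outline of the structure-loop and Klein--Maskit combination argument that those references actually carry out, and since you likewise defer the technical core (existence and finiteness of a complete, precisely invariant system of structure loops and the handling of accidental parabolics) to the same sources, your proposal is consistent with, and strictly more informative than, the paper's treatment.
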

\begin{proof} \cite{GFK} page 420 ( see also \cite{HK}).\end{proof}

This theorem shows that in our special case we have
\begin{align}
G = \mathfrak{B}_1\ast\cdots\ast\mathfrak{B}_r\ast
<f_1>\ast\cdots\ast <f_u>.
\end{align}
For B - groups if one boundary component be compact then it's a
Quasi - Fuchsian group. And $<f_1>\ast\cdots\ast <f_u>$ is a free
and purely loxodromic group and consequently it's a Schottky group
\cite{HK}. Hence we have
\begin{align}
G = Q_1\ast\cdots\ast Q_n\ast\Gamma
\end{align}
Where $Q_i$ is a Quasi - Fuchsian group and $\Gamma$ is a schottky
group. Then by Van Kampen theorem we have
\begin{align}
M_G = M_{Q_1}\# M_{Q_2}\#\cdots\# M_{Q_n}\# M_\Gamma
\end{align}
and if $\partial M_{Q_i}=S_{Q_i}\cup S'_{Q_i}$ then
\begin{align}
S_{n+1}=S'_{Q_1}\#S'_{Q_2}\#\cdots\#S'_{Q_n}\#S_{\Gamma}\hspace{5mm}
and \hspace{5mm} S_{Q_i}=S_i.
\end{align}
We will denote by $p_0$ the genus  of Schottky group $\Gamma$ in the
decomposition of the group $G$. We intend to analyze the structure
of the region of discontinuity of $G$. For the first, we consider
the case that we don't have the Schottky group $\Gamma$ in the
decomposition of $G$. Lets denote by $D_i$ and $D_i'$ the simply
connected components of the Quasi - Fuchsian group $Q_i$.

\begin{lem}\label{L3.11} For each $g\in G$, $g(D_i)\cap D_i=D_i$ or
$g(D_i)\cap D_i=\emptyset$.\end{lem}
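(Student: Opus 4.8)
The plan is to prove the dichotomy by showing that $D_i$ is a connected component of the \emph{full} region of discontinuity $\Omega(G)$, and that every $g\in G$ permutes the components of $\Omega(G)$. Once this is established the statement is automatic: $g$ restricts to a homeomorphism of $\Omega(G)$ onto itself (because $\Omega(G)$ is $G$-invariant), so $g(D_i)$ is again a component of $\Omega(G)$, and two components of an open set are either equal or disjoint. In the first case $g(D_i)=D_i$, giving $g(D_i)\cap D_i=D_i$; in the second $g(D_i)\cap D_i=\emptyset$.

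First I would record the structural facts. Since $Q_i$ is Quasi-Fuchsian, its limit set $\Lambda(Q_i)$ is a Jordan curve separating $\hat{\mathbb{C}}$ into the two simply connected $Q_i$-invariant components $D_i$ and $D_i'$, with $\partial D_i=\Lambda(Q_i)$. Because $Q_i\subseteq G$ we have $\Lambda(Q_i)\subseteq\Lambda(G)$, hence $\Omega(G)\subseteq\Omega(Q_i)=D_i\cup D_i'$ and $\partial D_i=\Lambda(Q_i)\subseteq\Lambda(G)=\hat{\mathbb{C}}\setminus\Omega(G)$. The key reduction is then purely topological: if one knows $D_i\subseteq\Omega(G)$, then $D_i$ is open in $\Omega(G)$ while its boundary misses $\Omega(G)$, so $D_i$ is simultaneously open and closed in $\Omega(G)$; being connected it is exactly one component of $\Omega(G)$.

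Thus everything rests on the single point $D_i\cap\Lambda(G)=\emptyset$, and this is where I expect the main obstacle to lie. Since $\Lambda(G)$ is the closure of the set of fixed points of the nontrivial elements of $G$, it suffices to show that no $g\in G\setminus\{1\}$ fixes a point of $D_i$. For $g\in Q_i$ the fixed points lie on $\Lambda(Q_i)=\partial D_i$, hence outside $D_i$, which also already shows $g(D_i)=D_i$ for such $g$. For $g\in G\setminus Q_i$ I would write $g$ as a reduced word in the free product $G=Q_1\ast\cdots\ast Q_n$ supplied by the structure theorem, and appeal to the Klein--Maskit combination that geometrically realizes this free product: it arranges the closed quasidisks $\overline{D_1},\dots,\overline{D_n}$ in a precisely invariant, pairwise disjoint configuration, so that tracking the image of $\overline{D_i}$ syllable by syllable pushes it off itself and keeps its limit points away from the interior of $D_i$. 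This ping-pong/precise-invariance step is the technical heart, and it is exactly where the Quasi-Fuchsian hypothesis (each $\overline{D_i}$ a genuine closed quasidisk with $\hat{\mathbb{C}}\setminus\overline{D_i}=D_i'$) and the freeness of the decomposition are used; the remaining passage to $\Lambda(G)$ by taking closures, and the final component argument, are then formal.
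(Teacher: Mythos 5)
The paper states Lemma \ref{L3.11} (and the companion Lemma \ref{L3.12}) with no proof at all, so there is nothing of the author's to compare your argument against: what you wrote supplies a justification the paper simply omits. Your reduction is correct and clean. Granting $D_i\subseteq\Omega(G)$, the facts that $D_i$ is open and connected with $\partial D_i=\Lambda(Q_i)\subseteq\Lambda(G)$ do make $D_i$ a connected component of $\Omega(G)$; since $g$ maps $\Omega(G)$ homeomorphically onto itself it permutes components, and you get the dichotomy in the stronger form $g(D_i)=D_i$ or $g(D_i)\cap D_i=\emptyset$. You are also right that the whole weight rests on $D_i\cap\Lambda(G)=\emptyset$, and the closure argument from fixed points is indeed formal once no nontrivial element fixes a point of $D_i$. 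Two remarks. First, be aware that the ping-pong step is not really an independent argument: the statement that $\overline{D_1},\dots,\overline{D_n}$ and their $G$-translates form a precisely invariant, pairwise disjoint configuration is itself part of the output of the Klein--Maskit combination theory underlying the Marden/Maskit decomposition $G=Q_1\ast\cdots\ast Q_n\ast\Gamma$ that the paper quotes; so at that point you are citing the same structure theorem the paper leans on, which is legitimate but should be flagged as a citation rather than a proof. Second, your argument silently uses that $D_i$ is the \emph{non-invariant} component of $\Omega(Q_i)$, the one with $D_i/Q_i=S_i$; the dichotomy is false for the other component $D_i'$, which contains $\Omega_0$ together with the other quasidisks and meets $\Lambda(G)$, so the choice of component (which the paper only pins down later, in Proposition \ref{P3.15}) should be made explicit at the outset.
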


\begin{lem}\label{L3.12} For each $i=1,...,n$, the sets $\mathcal{D}_i=\bigcup_{g\in
G}g(D_i)$ are $G$ - invariant. Then
$\bigcap_1^n\mathcal{D}_i=\emptyset.$
\end{lem}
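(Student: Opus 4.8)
The plan is to prove the stronger statement that the sets $\mathcal{D}_1,\dots,\mathcal{D}_n$ are \emph{pairwise} disjoint, from which $\bigcap_1^n\mathcal{D}_i=\emptyset$ is immediate once $n\geq 2$. The $G$-invariance is the easy half: for any $h\in G$ one has $h(\mathcal{D}_i)=\bigcup_{g\in G}hg(D_i)=\bigcup_{g'\in G}g'(D_i)=\mathcal{D}_i$, since left multiplication by $h$ permutes $G$. So the whole content is the disjointness, and I would reduce this to one geometric fact: each $D_i$ is a genuine connected component of $\Omega(G)$ whose full $G$-orbit is precisely the preimage of the single boundary surface $S_i$ under the covering $\Omega(G)\to\Omega(G)/G=\partial M_G$.

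First I would check that $D_i$ actually lies in $\Omega(G)$ and is a component there. Because $Q_i\leq G$ we have $\Lambda(Q_i)\subseteq\Lambda(G)$, hence $\Omega(G)\subseteq\Omega(Q_i)=D_i\cup D_i'$. The invariant component $\Omega_0$ is connected and disjoint from $\Lambda(Q_i)\subseteq\Lambda(G)$, so it lies entirely in one of $D_i,D_i'$; I relabel so that $\Omega_0\subseteq D_i'$ for every $i$ (this is the side that is glued into $S_{n+1}$, while $D_i$ is the side producing $S_i$). By Proposition \ref{P3.11}, $\Lambda(G)=\partial\Omega_0\subseteq\overline{D_i'}=D_i'\cup\Lambda(Q_i)$, which is disjoint from the open set $D_i$. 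Thus $D_i\cap\Lambda(G)=\emptyset$, i.e. $D_i\subseteq\Omega(G)$; being a connected subset that already exhausts a whole component of the larger set $\Omega(Q_i)\supseteq\Omega(G)$, it is a full component of $\Omega(G)$.

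Next I would identify the orbit. By Lemma \ref{L3.11} the translates $g(D_i)$ are pairwise equal or disjoint, so they are exactly the components of $\Omega(G)$ in the $G$-orbit of $D_i$, and $\mathcal{D}_i$ is their disjoint union. The stabilizer of $D_i$ in $G$ is $Q_i$: the inclusion $Q_i\subseteq\mathrm{Stab}_G(D_i)$ holds since $D_i$ is $Q_i$-invariant, and the reverse is forced by the decomposition, under which $D_i/Q_i=S_{Q_i}=S_i$ is one of the distinct boundary components of $M_G$. Consequently $\mathcal{D}_i$ is precisely the preimage of $S_i$ under $\Omega(G)\to\partial M_G$. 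Since $S_i$ and $S_j$ are distinct components of $\partial M_G$ for $i\neq j$, their preimages are disjoint, giving $\mathcal{D}_i\cap\mathcal{D}_j=\emptyset$ and therefore $\bigcap_1^n\mathcal{D}_i=\emptyset$.

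The step I expect to cost the most care is fixing the labeling of the two sides of each $\Lambda(Q_i)$ coherently, that is, checking that $\Omega_0$ can be placed on the $D_i'$ side simultaneously for all $i$ and that $\mathrm{Stab}_G(D_i)$ is exactly $Q_i$ rather than something larger, so that $\mathcal{D}_i$ corresponds to the single surface $S_i$ and not to several glued components. An alternative, more self-contained route avoids the covering picture and argues combinatorially: the disks $D_1,\dots,D_n$ are pairwise disjoint (the case $k=\mathrm{id}$ above), and using the reduced-word normal form in the free product $G=Q_1\ast\cdots\ast Q_n$ together with the dichotomy of Lemma \ref{L3.11} one shows by a ping-pong induction on word length that $k(D_i)\cap D_j=\emptyset$ for every $k\in G$ and $i\neq j$, which is exactly $\mathcal{D}_i\cap\mathcal{D}_j=\emptyset$.
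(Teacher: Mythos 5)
The paper states this lemma (like Lemma \ref{L3.11}) without any proof, so there is no argument of the author's to measure yours against; judged on its own terms, your proposal is essentially correct and in fact proves the stronger statement that the $\mathcal{D}_i$ are pairwise disjoint. The invariance half is indeed trivial. Your route to disjointness --- $\Lambda(Q_i)\subseteq\Lambda(G)$ gives $\Omega(G)\subseteq D_i\cup D_i'$; Proposition \ref{P3.11} gives $\Lambda(G)=\partial\Omega_0\subseteq\overline{D_i'}=D_i'\cup\Lambda(Q_i)$, hence $D_i\subseteq\Omega(G)$ and $D_i$ is a full component of $\Omega(G)$; the translates $g(D_i)$ are then components of $\Omega(G)$, so $\mathcal{D}_i$ is exactly the preimage of the boundary surface $S_i$ under $\Omega(G)\to\partial M_G$, and distinct surfaces have disjoint preimages --- is sound and is consistent with the conventions the paper adopts immediately afterwards (Proposition \ref{P3.15} places $\Omega_0$ inside every $D_i'$, and the sentence following the two lemmas invokes exactly the disjointness of $S_1,\dots,S_n$ that you use).

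Two caveats you should make explicit. First, the argument is not self-contained: it inherits from the paper's setup the unproved identification of $D_i/Q_i=S_{Q_i}$ with the component $S_i$ of $\partial M_G$, it uses Lemma \ref{L3.11}, and the step $\mathrm{Stab}_G(D_i)=Q_i$ is really a conclusion of the Klein--Maskit first combination theorem (precise invariance of $D_i$ under $(Q_i,G)$) rather than something ``forced by the decomposition''; deducing it from the surface identification is mildly circular, since that identification is itself part of the combination-theorem package. Citing precise invariance once would discharge Lemma \ref{L3.11}, the stabilizer claim, and the disjointness simultaneously; your ping-pong alternative is essentially a proof of that precise invariance and would make everything self-contained. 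Second, the statement is false for $n=1$, so your proviso ``once $n\geq2$'' is doing real work; it holds here because the sub-case under consideration (non-simply-connected invariant component, no Schottky factor) forces at least two free factors.
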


From proposition (\ref{P3.11} ) all other components than $\Omega_0$
of $\Omega(G)$ are simply connected. Now, since $S_1\cap\cdots\cap
S_n=\emptyset$  and from lemmas ( \ref{L3.11} ), ( \ref{L3.12} ) and
the decomposition of $\partial M_G$ we see that $\{ D_1,D_2,...,D_n,
\Omega_0\}$ is a complete system of representatives of the
equivalent classes ( $\Omega_1$ and $\Omega_2$ are equivalent or
conjugate if and only if $stab_G(\Omega_1)$ is conjugate in $G$ to
$stab_G(\Omega_2)$; and since $z^\pm (hgh^{-1})= hz^\pm(g)$ then
$\Omega_1$ and $\Omega_2$ are equivalent or conjugate if and only if
there is a $g$ in $G$ such that $\Omega_1=g(\Omega_2)$ ) of the
components of $\Omega(G)$. Since $\Omega_0$ is $G$ - invariant the
class of $\Omega_0$ has one member. Then we have

\begin{prop}\label{P3.15} If we consider that the class of $D_i$ is for
$S_{Q_i}=S_i$, then
\begin{align}
\Omega_0=\mathbb{\hat C}\setminus(\bigcup_{i=1}^n\bigcup_{g\in G}
 g({\bar D}_i))=\bigcap_{i=1}^n\bigcap_{g\in G} g(D_i').\nonumber
\end{align}
\end{prop}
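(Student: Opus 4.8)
The plan is to prove the first identity $\Omega_0=\hat{\mathbb{C}}\setminus\bigcup_{i,g}g(\bar{D}_i)$ and to note that the second equality is then purely formal. Since each $Q_i$ is quasi-Fuchsian, its limit set $\Lambda(Q_i)=\partial D_i$ is a Jordan curve separating the two invariant components, so $D_i'=\hat{\mathbb{C}}\setminus\bar{D}_i$; as every $g\in G$ is a homeomorphism of $\hat{\mathbb{C}}$ we get $g(D_i')=\hat{\mathbb{C}}\setminus g(\bar{D}_i)$, and De Morgan's law yields $\bigcap_{i,g}g(D_i')=\hat{\mathbb{C}}\setminus\bigcup_{i,g}g(\bar{D}_i)$. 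Thus only the first equality carries content, and I will prove it by a double inclusion.

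First I would check $\Omega_0\cap g(\bar{D}_i)=\emptyset$ for all $i$ and all $g\in G$. Because $\{D_1,\dots,D_n,\Omega_0\}$ is a complete system of representatives of the components of $\Omega(G)$ and $\Omega_0$ is $G$-invariant, the set $g(D_i)$ is a component of $\Omega(G)$ distinct from $\Omega_0$, hence disjoint from it. Moreover $g(\partial D_i)$ is the boundary of the component $g(D_i)$ and therefore lies in $\Lambda(G)$, which is disjoint from $\Omega_0\subseteq\Omega(G)$. Consequently $\Omega_0\cap g(\bar{D}_i)=\Omega_0\cap\bigl(g(D_i)\cup g(\partial D_i)\bigr)=\emptyset$, so $\Omega_0\subseteq\hat{\mathbb{C}}\setminus\bigcup_{i,g}g(\bar{D}_i)$; equivalently, using $G$-invariance of $\Omega_0$ and $\Omega_0\subseteq D_i'$, we get $\Omega_0\subseteq\bigcap_{i,g}g(D_i')$.

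For the reverse inclusion take $z\notin\bigcup_{i,g}g(\bar{D}_i)$ and show $z\in\Omega_0$. If $z\in\Omega(G)$ then $z$ lies in a single component, which by the representative system is either $\Omega_0$ or some $g(D_i)$; the latter is excluded since it would force $z\in g(\bar{D}_i)$, so $z\in\Omega_0$. The whole difficulty is therefore concentrated in the case $z\in\Lambda(G)$, where I must reach a contradiction — that is, I must establish the covering
\[
\Lambda(G)\subseteq\bigcup_{i=1}^{n}\bigcup_{g\in G}g(\bar{D}_i).
\]
Since $z\in\Lambda(G)$ already gives $z\notin g(D_i)$ for every $i,g$, this covering is exactly the statement that each limit point lies on the boundary $g(\partial D_i)$ of some translated component, i.e. that the residual limit set of $G$ is empty.

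This covering is the main obstacle. I would derive it from the free-product structure $G=Q_1\ast\cdots\ast Q_n$ together with Lemmas \ref{L3.11} and \ref{L3.12}: these say that the closed Jordan disks $g(\bar{D}_i)$ are pairwise nested or disjoint and that their orbits organize into a tree, so that any reduced infinite word $s_1s_2\cdots$ with factors alternating among the $Q_i$ produces a strictly decreasing chain $g_1(\bar{D}_{i_1})\supseteq g_1g_2(\bar{D}_{i_2})\supseteq\cdots$ of closed disks whose intersection is a single point, and — invoking $\Lambda(G)=\partial\Omega_0$ from Proposition \ref{P3.11} — every $\xi\in\Lambda(G)$ arises in this way; any such $\xi$ then lies in the outermost disk $g_1(\bar{D}_{i_1})$, hence in the union. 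The delicate point, where I expect to spend the most care, is to rule out limit points that are accumulations of mutually disjoint translated disks lying in none of them; controlling these requires the uniform shrinking of the $g(\bar{D}_i)$ along every branch, which follows from proper discontinuity of $G$ on $H^3$ via the nesting in Lemmas \ref{L3.11}--\ref{L3.12}, and guarantees that each limit point sits inside a genuine nested chain rather than merely on its accumulation. Once $\Lambda(G)\subseteq\bigcup_{i,g}g(\bar{D}_i)$ is secured, the case $z\in\Lambda(G)$ is impossible, the reverse inclusion holds, and the proposition follows.
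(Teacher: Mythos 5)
Your reduction of the second equality to De Morgan's law, your proof of the inclusion $\Omega_0\subseteq\hat{\mathbb{C}}\setminus\bigcup_{i,g}g(\bar D_i)$, and your identification of the covering $\Lambda(G)\subseteq\bigcup_{i,g}g(\bar D_i)$ as the only real content are all correct, and this diagnosis is sharper than the paper's own one-line proof, which merely records that $\Omega(G)$ decomposes into $\Omega_0$ and the orbits of the $D_i$ and hence only establishes the identity inside $\Omega(G)$. The gap is in the mechanism you propose for the covering. There are no nested chains $g_1(\bar D_{i_1})\supseteq g_1g_2(\bar D_{i_2})\supseteq\cdots$ here: a quasi-Fuchsian factor $Q_{i_1}$ stabilizes \emph{both} complementary components of its limit quasi-circle, so for $g_1\in Q_{i_1}$ and $i_2\neq i_1$ one has $\bar D_{i_2}\subset D_{i_1}'$ and therefore $g_1(\bar D_{i_2})\subset g_1(D_{i_1}')=D_{i_1}'=\hat{\mathbb{C}}\setminus\bar D_{i_1}$; the image lands \emph{outside} $\bar D_{i_1}$, not inside it. Consistently with Lemmas \ref{L3.11} and \ref{L3.12} and Figure \ref{Fig3.13}, the translated closed disks are pairwise disjoint (or equal), never nested. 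The Schottky picture you are importing depends on each generator mapping the exterior of one disk into the interior of its partner, which is exactly the feature that quasi-Fuchsian factors lack.

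Moreover the covering itself is false for $n\geq 2$, so no argument can close the gap as stated: the asserted identity is equivalent to $\Lambda(G)=\bigcup_{i,g}g(\Lambda(Q_i))$, a countable union of pairwise disjoint quasi-circles each of which is a proper closed subset of $\Lambda(G)$. If that held, Baire category would give some $g(\Lambda(Q_i))$ nonempty relative interior in $\Lambda(G)$; minimality of the $G$-action on $\Lambda(G)$ plus compactness would then exhibit $\Lambda(G)$ as a finite union of these pairwise disjoint Jordan curves, which is impossible because the $G$-orbit of any one of them is infinite (its stabilizer $gQ_ig^{-1}$ has infinite index) and each curve of that orbit, being connected, would have to coincide with one of the finitely many. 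This is the same phenomenon as for Schottky groups, where the Cantor set $\Lambda(\Gamma)$ strictly contains the countable union of translated fixed-point pairs. The statement that is actually true, and all that the sequel uses, replaces the union of closures by the closure of the union: $\Omega_0=\hat{\mathbb{C}}\setminus\overline{\bigcup_{i,g}g(D_i)}$, equivalently $\Omega_0=\Omega(G)\cap\bigcap_{i,g}g(D_i')$. For that version your first two steps already suffice once you add that $\overline{\bigcup_{i,g}g(D_i)}$ is a closed $G$-invariant set meeting $\Lambda(G)$ and hence contains $\Lambda(G)$ by minimality, which disposes of the case $z\in\Lambda(G)$ without any covering by closed disks.
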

\begin{proof} $\Omega(G)$ contains only the $G$ - invariant sets
$\mathcal{D}_i$ and $\Omega_0$.\end{proof}

 By lemmas ( \ref{L3.11} ), ( \ref{L3.12} ) and proposition
( \ref{P3.15} ) we have
\begin{align}
\Lambda(G)=\bigcup_{i=1}^n \bigcup_{g\in
G}g(\partial(D_i))=\bigcup_{i=1}^n \bigcup_{g\in G}g(\Lambda(Q_i)).
\end{align}
( Figure \ref{Fig3.13} for the case that $n=3$ and there is no
Schottky group ). Then we have
\begin{figure}[!hbtp]
\centerline{\includegraphics{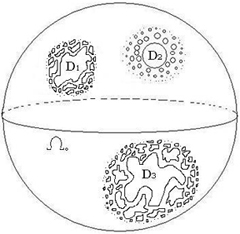}} \caption { }\label{Fig3.13}
\end{figure}
\begin{align}
S_1=D_1/Q_1,\cdots,S_n=D_n/Q_n\hspace{1cm} and
\hspace{1cm}S_{n+1}=\Omega_0/G.
\end{align}

\subsection{Fundamental domain of the group $G$} Let
$K_i=F_i\cup F_i'$ be a fundamental domain for the group $Q_i$ such
that $F_i\subset D_i$ and $F_i'\subset D_i'$. Then a fundamental
domain for the group $G$ is ( Figure \ref{Fig3.14} )
\begin{align}
K=(\bigcup_{i=1}^nF_i)\cup(\bigcap_{i=1}^nF_i').
\end{align}
\begin{figure}[!hbtp]
\centerline{\includegraphics{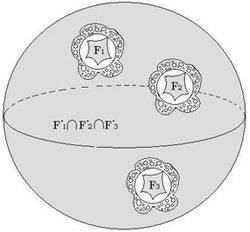}} \caption { }\label{Fig3.14}
\end{figure}
In this case we have $p_{n+1}=p_1+p_2+\cdots+p_n.$\\

Now, lets consider the general case that there is  a Schottky group
$\Gamma_0=< \gamma_{01},..., \gamma_{0 p_0} >$ (of order $p_0$) in
the decomposition of the group $G$. Again we know that all other
components of $\Omega(G)$ than $\Omega_0$ is simply connected. Then
$\partial M_{\Gamma_0}$ can not be glued to the components other
than $S_{n+1}$ in $\partial M_G$. Also this means that in this case
we have
\begin{align}
\Omega_0=(\mathbb{\hat C}\setminus(\bigcup_{i=1}^n\bigcup_{g\in G}
 g({\bar D}_i)))\setminus\bigcup_{g\in
 G}g(\Lambda(\Gamma_0))=\bigcap_{g\in G}(\bigcap_{i=1}^n g(D_i')\setminus
g(\Lambda(\Gamma_0)))\nonumber
\end{align}
 And
\begin{align}
\Lambda(G)=\bigcup_{i=1}^n\bigcup_{g\in G}
g(\Lambda(Q_i)\cup\Lambda(\Gamma_0)).
\end{align}
 In this case if $\{A_1,...,A_{2p_0}; \gamma_{01},...,
\gamma_{0 p_0} \}$ be a marking for the Schottky group $\Gamma_0$,
then the fundamental domain is
\begin{align}
K=(\bigcup_{i=1}^nF_i)\cup((\bigcap_{i=1}^nF_i')\backslash\bigcup_{i=1}^{p_0}(A_i\cup\bar{A}_{p_0+i})).
\end{align}
And also for the genus of the component $S_{n+1}$ we have
$p_{n+1}=p_0+p_1+\cdots+p_n$.

\subsection{Green's Function on $S_i$} \label{S3.10} For each $i=1,...,n$  lets consider
\begin{align}
Q_i=\langle \{q_{ij} ; j=1,...,2p_i \}; \prod_{j=1}^{p_i} [q
_{ij},q_{ip_i+j}]=I \rangle.
\end{align}
Then we can compute Green's function on the components $S_i$ for
$i=1,...,n$ as like as the previous section.

\subsection{Schottky group associated to the group  $G$} From the definition of $Q_i$ we know that
$Q_i/N_i$ is a free group generated by $p_i$  generators. We have
the following lemma

\begin{lem} Lets denote by $N$ the smallest normal subgroup of the
group $Q_1\ast\cdots\ast Q_n$ including the generators $q_{ij}$ for
$i=1,...,n$ and $j=p_i+1,...,2p_i$ and $N_i$ be the smallest normal
subgroup of the group $Q_i$ including the generators $q_{ij}$ for
$j=p_i+1,...,2p_i$.
 Then we have
\begin{align}
\frac{Q_1\ast\cdots\ast
Q_n}{N}\cong\frac{Q_1}{N_1}\ast\cdots\ast\frac{Q_n}{N_n}.
\end{align}
\end{lem}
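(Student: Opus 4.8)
The plan is to prove the isomorphism purely formally, using the universal property of the free product (its characterization as the coproduct in the category of groups), so that no explicit manipulation of normal forms of words is needed. Write $P := (Q_1/N_1)\ast\cdots\ast(Q_n/N_n)$, and for each $i$ let $\pi_i : Q_i \to Q_i/N_i$ be the quotient map and $j_i : Q_i/N_i \to P$ the canonical inclusion of the $i$-th factor.

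First I would construct a homomorphism $(Q_1\ast\cdots\ast Q_n)/N \to P$. The composites $\psi_i := j_i\circ\pi_i : Q_i \to P$ fit together, by the universal property of $Q_1\ast\cdots\ast Q_n$, into a unique homomorphism $\Psi : Q_1\ast\cdots\ast Q_n \to P$ with $\Psi|_{Q_i}=\psi_i$. Since each generator $q_{ij}$ with $j=p_i+1,\dots,2p_i$ lies in $N_i$, we have $\Psi(q_{ij}) = j_i(\pi_i(q_{ij})) = j_i(1) = 1$; as $\ker\Psi$ is normal and contains all these generators, it contains their normal closure $N$. Hence $\Psi$ descends to a homomorphism $\bar\Psi : (Q_1\ast\cdots\ast Q_n)/N \to P$.

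For the inverse I would run the same argument in the opposite direction. Let $q : Q_1\ast\cdots\ast Q_n \to (Q_1\ast\cdots\ast Q_n)/N$ be the quotient map and $k_i : Q_i \to Q_1\ast\cdots\ast Q_n$ the free-product inclusions. For $j=p_i+1,\dots,2p_i$ the element $k_i(q_{ij})$ lies in $N$, so $q\circ k_i$ kills every generator of $N_i$ and therefore factors through a homomorphism $\phi_i : Q_i/N_i \to (Q_1\ast\cdots\ast Q_n)/N$ with $\phi_i\circ\pi_i = q\circ k_i$. The universal property of $P$ then assembles the $\phi_i$ into a unique homomorphism $\Phi : P \to (Q_1\ast\cdots\ast Q_n)/N$ with $\Phi|_{Q_i/N_i} = \phi_i$.

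It remains to verify that $\bar\Psi$ and $\Phi$ are mutually inverse, which I would check on generators and then close by invoking uniqueness. For $x\in Q_i$ one computes $\bar\Psi(q(k_i(x))) = \psi_i(x) = j_i(\pi_i(x))$ and then $\Phi(j_i(\pi_i(x))) = \phi_i(\pi_i(x)) = q(k_i(x))$, so $\Phi\circ\bar\Psi$ fixes every generator $q(k_i(x))$ of $(Q_1\ast\cdots\ast Q_n)/N$ and hence equals the identity; symmetrically $\bar\Psi\circ\Phi$ fixes each $j_i(\pi_i(x))$ and so, by the uniqueness clause in the universal property of $P$, equals $\mathrm{id}_P$. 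The one point demanding care — the only place where the argument could go wrong — is the bookkeeping of normal closures: one must be sure that $N$ is exactly the normal closure in the free product of the union $\bigcup_i\{q_{ij}: j>p_i\}$, so that the containment $N\subseteq\ker\Psi$ holds, and dually that each generator of $N_i$ is sent into $N$ by $k_i$. Once these two containments are in place, the two universal properties carry out all the remaining work.
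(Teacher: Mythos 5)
Your proof is correct, and it is the cleaner of the two arguments. The paper's own proof is a one\--liner: for $n=2$ it exhibits the word map $\varphi : Q_1\ast Q_2 \to (Q_1/N_1)\ast(Q_2/N_2)$, $x_1\cdots x_k \mapsto \bar x_1\cdots \bar x_k$, calls it an isomorphism, and handles general $n$ by induction. As literally written that is a slip --- $\varphi$ is only a surjection whose kernel is $N$, and it is the induced map on $(Q_1\ast Q_2)/N$ that is the isomorphism --- but the underlying idea is the same as yours: both proofs ultimately rest on the free product being the coproduct of groups. What your version buys is (i) a genuinely complete argument, since you construct the inverse $\Phi$ explicitly and verify $\Phi\circ\bar\Psi=\mathrm{id}$ and $\bar\Psi\circ\Phi=\mathrm{id}$ on generators rather than asserting bijectivity of a word map (which would otherwise require a normal\--form argument to see injectivity), and (ii) uniform treatment of all $n$ at once, with no induction. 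Your closing caveat about the two normal\--closure containments is exactly right and is satisfied here by definition: $N$ is defined as the smallest normal subgroup of $Q_1\ast\cdots\ast Q_n$ containing all the $q_{ij}$ with $j>p_i$, so $N\subseteq\ker\Psi$ is automatic, and $k_i(q_{ij})\in N$ for $j>p_i$ is immediate from that same definition. No gap remains.
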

\begin{proof} In the case $n=2$ the map
\begin{center}
$\varphi : Q_1\ast Q_2\longrightarrow
\frac{Q_1}{N_1}\ast\frac{Q_2}{N_2}$\\
$x_1...x_k\longmapsto \bar{x}_1...\bar{x}_k$
\end{center}
is an isomorphism. The general case is true by induction on $n$.
\end{proof}

 Now if $N$ be the smallest normal subgroup of the group $G$
including the generators $q_{ij}$ for $i=1,...,n$ and
$j=p_i+1,...,2p_i$, Then by the previous lemma one can see that
$G/N$ is isomorphic to the group
$$\Gamma_0\ast\frac{Q_1}{N_1}\ast\cdots\ast\frac{Q_n}{N_n}
=\Gamma_0\ast \Gamma_1\ast\cdots\ast \Gamma_n.$$
Then $G/N$ is a free group of order $p_{n+1}$.\\

Now lets consider the covering space map $\Omega_0\longrightarrow
S_{n+1}$. Since $N$ is a normal subgroup of $G$ then the covering
space $\Omega_0/N\longrightarrow S_{n+1}$ is  regular  and is
between $\Omega_0\longrightarrow S_{n+1}$ with the covering
transformations group $Aut(\Omega_0/N)=G/N$ that is a free group of
order $p_{n+1}$. Then similar to the previous section we have a
Schottky group $\Gamma=\langle
\gamma_{i1},\gamma_{i2},...,\gamma_{ip_i}\quad| \quad
i=0,...,n\quad\rangle$ ( we can arrange the generators of the group
$\Gamma$ in this way because of the isometry ) and a complex -
analytic covering mapping $J:\Omega_0\longrightarrow \Omega(\Gamma)$
such that the following diagram is commutative
\begin{center}
$\Omega_0\hspace{8mm}\overset{J}{\longrightarrow}\hspace{5mm}\Omega(\Gamma)$\\
$\pi_G\searrow\hspace{10mm}\swarrow \pi_\Gamma$\\
$S_{n+1}$
\end{center}
and for $i=1,...,n$ and $j\leq p_i$, $J\circ q_{ij}=\gamma_{ij}\circ
J$ and for $j> p_i$, $J\circ q_{ij}=J$ and $J\circ \gamma_{0j}
=\gamma_{0j}\circ J$ (we have considered  the same notations for the
generators of $\Gamma_i$ and $\Gamma$ ). As a matter of fact there
is a Fuchsian group $F$ and there are normal subgroups $H_1$ and
$H_2$ of $F$ such that the sequence
\begin{align}
H^2\overset{J'}{\longrightarrow}\frac{H^2}{H_1}\cong\Omega_0\overset{J}{\longrightarrow}
\frac{H^2}{H_2}\cong\frac{\Omega_0}{N}\longrightarrow S_{n+1}
\end{align}
is the composition of analytic covering maps. Then we have the
composition of covering maps
\begin{align}
H^2\quad\overset{J\circ
J'}{\longrightarrow}\quad\frac{H^2}{H_2}\cong\frac{\Omega_0}{N}\quad\longrightarrow
S_{n+1}
\end{align}
and since $F/H_2=Aut(\Omega_0/N)=G/N$ is a free group, like the
previous section we have $\Omega_0/N\cong
H^2/H_2\cong\Omega(\Gamma)$ for some Schottky group $\Gamma$. In
fact, for the first, because of the isometries
$F/H_1=Aut(H^2/H_1\cong\Omega_0)=G$ we can choose the Fuchsian group
\begin{align}
F = \langle \{f_{ij}\ ; i=0,\ldots,n\quad j=1,...,2p_i
\};\prod_{i=0}^n\prod_{j=1}^{p_i} [ f_{ij},f_{i,p_i+j}]=I\quad
\rangle
\end{align}
such that the equations $g_{ij}\circ J'=J'\circ f_{ij}$ for
$i=1,...,n$ and $j=1,...,p_i$ are satisfied. Where
$g_{ij}(=q_{ij}\quad or\quad \gamma_{0j})$ are the generators of
$G$. Then like the previous section we can choose uniquely up to
conjugation in $\mathrm{PGL}(2, \mathbb{C})$ the Schottky group
$\Gamma$ that satisfies the equations
\begin{align}
(J\circ J')\circ f_{ij}=\gamma_{ij}\circ(J\circ J')\hspace{5mm} and
\hspace{5mm}  (J\circ J')\circ f_{ip_i+j}=J\circ J'
\end{align}
 Then since $J'$ is onto, the relations $J\circ g_{ij}=\gamma_{ij}\circ J$ and $J\circ
g_{ip_i+j}=J$ are satisfied automatically for $i=1,...,n$ and
$j=1,...,p_i$. This
 means that the following diagram is commutative in all loops
$$H^2\quad\overset{J'}{\longrightarrow}\quad\Omega_0\quad\overset{J}{\longrightarrow}\quad\Omega(\Gamma)$$
$$\quad f_{ij}\updownarrow\hspace{13mm} g_{ij}\updownarrow\hspace{15mm} \gamma_{ij}\updownarrow\hspace{13mm}$$
$$H^2\quad\overset{J'}{\longrightarrow}\quad\Omega_0\quad\overset{J}{\longrightarrow}\quad\Omega(\Gamma)$$
This gives a proof for the existing the of covering space
$\Omega(\Gamma)$ and Schottky group $\Gamma$ satisfying the
properties that we need and also shows the relations between the
Fuchsian group $F$ , Kleinian group $G$ and the Schottky group
$\Gamma$ associated to
$S_{n+1}$.\\

Now lets put $G_0=\Gamma_0\ast Q_{10}\ast Q_{20}\ast\cdots \ast
Q_{n0}$. Where $Q_{i0}$ is the free subgroup of $Q_i$ generated by
the elements $q_{i1},...,q_{ip_i}$. For the extension of $J$ to
$\Lambda(G_0)$  i.e. defining the map
$J:\Omega_0\cup\Lambda(G_0)\longrightarrow\mathbb{\hat{C}}$, we know
that each $Q_{i0}$ is free and purely loxodromic then it is a
Schottky group. Like the previous section we can code an element of
$\Lambda(Q_{i0})$ by Schottky coding $x=\cdots
q_{ij_2}^{\epsilon_{j_2}}q_{ij_1}^{\epsilon_{j_1}}q_{ij_0}^{\epsilon_{j_0}}(x_0)$
for $j_k\leq p_i$ and a point $x_0$ in $\Omega_0\subset D'_i$. For
each element $g$ in $G_0$ lets denote by $\gamma$ the element in
$\Gamma$ corresponding to $g$, such that $J\circ g=\gamma\circ J$.
Now put
$$J_0:\bigcup_{g\in G}g(\Lambda(\Gamma_0))\longrightarrow
\Lambda(\Gamma)$$
$$g(\cdots\gamma_{i_2}^{\epsilon_{2}}\gamma_{i_1}^{\epsilon_{1}}\gamma_{i_0}^{\epsilon_{0}}(x_0))\mapsto
\gamma(\cdots
\gamma_{0i_2}^{\epsilon_{2}}\gamma_{0i_1}^{\epsilon_{1}}\gamma_{0i_0}^{\epsilon_{0}}(z_0))$$
Where $z_0=J(x_0)$. And for $i=1,...,n$ put
$$J_i:\bigcup_{g\in G}g(\Lambda(Q_{i0}))\longrightarrow
\Lambda(\Gamma)$$
$$g(\cdots
q_{ij_2}^{\epsilon_{j_2}}q_{ij_1}^{\epsilon_{j_1}}q_{ij_0}^{\epsilon_{j_0}}(x_0)
)\mapsto \gamma(\cdots
\gamma_{ij_2}^{\epsilon_{j_2}}\gamma_{ij_1}^{\epsilon_{j_1}}\gamma_{ij_0}^{\epsilon_{j_0}}(z_0))$$
 And finally lets define
$$J:\Lambda(G_0)\longrightarrow \Lambda(\Gamma)$$
$$J(x)=\begin{cases}J_0(x)&if~ x\in \bigcup_{g\in
G}g(\Lambda(\Gamma_0))\\~\\J_i(x) &if~x\in\bigcup_{g\in
G}g(\Lambda(Q_{i0}))\end{cases}.$$\\
Then like the previous section we
can show that for each element $g$ in $G_0$  and $\gamma$  the
element of $\Gamma$ corresponding to $g$ and each $x$ in
$\Lambda(G_0)$ we have $J\circ g(x)=\gamma\circ J(x)$ and
$J(z^\pm(g))=z^\pm(\gamma)$. Also like the previous chapter  we can
consider the Fuchsian coding for the points of $\Lambda(G_0)$ and
from (\ref{F2.9}) we can explain $J$ and consequently the Green's
function on $S_{n+1}$ via the Fuchsian coding.

\subsection{Green's function of $S_{n+1}$} If we consider the condition $a(\Gamma)<1$ and use the
subgroup $G_0=\Gamma_0\ast Q_{10}\ast Q_{20}\ast\cdots \ast Q_{n0}$
instead of $F_0$ in the previous section then one can bring all of
the definitions like before with some changes, and compute the
Green's function on $S_{n+1}$ and the other parts in the same way.
In this case we should notice that if $\{a_{ij},b_{ij} |i=0,...,n ,
j=1,...,p_i\}$ be a set that makes a base for $H_1(S_{n+1},
\mathbb{Z})$ then each member of it has a class of images in
$\Omega_0$. But we can consider that the representatives that are
used in the computations are in the fundamental domain. As its shown
in figure \ref{Fig3.14}. In this case these images are in
$D_i'\cap\Omega_0$ for $i=1,...,n$ and also in
$\Omega(\Gamma_0)\cap\Omega_0$ for $i=0$. Then the representatives
for $S_i$ and $S_{n+1}$ are coincide if we uniformize $S_i$ by
$D'_i$, i.e. identifying both components of $\partial M_{Q_i}$.
Final formula for the Green's function on $S_{n+1}$ is as following
\begin{align}
g_{s_{n+1}}((a)-(b)&,(c)-(d))=\sum_{f\in G_0}\log\mid\langle
J(a),J(b),J(f(c)),J(f(d))\rangle\mid\nonumber\\
&-\sum_{\substack{i=0,...,n\\
j=1,...,p_i}}\overline{X}_{ij}(a,b)\sum_{f\in
S(f_{ij})}\log\mid\langle
J(z^+(f)),J(z^-(f)),J(c),J(d)\rangle\mid\label{F3.10}
\end{align}

Where $f_{ij}$ is equal to $q_{ij}$ for $i=1,...,n$ and to
$\gamma_{0j}$ for $i=0$, and $\overline{X}_{ij}(a,b)$ are the
multipliers for $g_{s_{n+1}}$.

\subsection{Remark (Infinitely many boundary component case )}
When we have infinitely many boundary components in the boundary of
$N_G$ that are Riemann surfaces without puncture, according to the
Ahlfors finiteness theorem, $G$ is an infinitely generated Kleinian
group. And hence one of the boundary components of $M_G$ is with
infinity genus and is not a compact Riemann surface then for this
component the Green's function is not defined. But for the other
components we can bring the computations similar to the previous
condition using some subgroups of $G$ that are isomorphic to the
Kleinian groups in the previous sections.

\end{document}